\newcommand{\subjclass}[2][2010]{%
  \let\@oldtitle\@title%
  \gdef\@title{\@oldtitle\footnotetext{#1 \emph{Mathematics subject classification.} #2}}%
}
\newcommand{\keywords}[1]{%
  \let\@@oldtitle\@title%
  \gdef\@title{\@@oldtitle\footnotetext{\emph{Key words and phrases.} #1.}}%
}
\theoremstyle{plain}
\newtheorem{thm}{Theorem}[section]
\newtheorem{cor}[thm]{Corollary}
\newtheorem{lem}[thm]{Lemma}
\newtheorem{prop}[thm]{Proposition}
\theoremstyle{definition}
\theoremstyle{remark}
\newtheorem{rem}{Remark}[section]
\newtheorem{ex}{Example}[section]
\newcommand{\vtsp}{\hspace{0.1em}}
\DeclareMathOperator{\Dom}{Dom}
\DeclareMathOperator{\Curl}{curl}
\DeclareMathOperator{\Div}{div}
\DeclareMathOperator{\re}{Re}
\newcommand{\loc}{\mathrm{loc}}
\newcommand{\supp}{\mathrm{supp}}
\newcommand{\R}{\mathbb{R}}
\newcommand{\Z}{\mathbb{Z}}
\newcommand{\N}{\mathbb{N}}
\newcommand{\C}{\mathbb{C}}
\renewcommand{\epsilon}{\varepsilon}
\newcommand{\sfrac}[2]{\text{\small$\dfrac{#1}{#2}$}}
\renewcommand{\S}{\mathbb{S}}  
\newcommand{\ri}{\mathrm{i}}  
\newcommand{\rd}{\mathrm{d}}  
\newcommand{\Dirac}[1][A]{\mathcal D_{#1}}  
\newcommand{\magp}{A}  
\newcommand{\bmagp}{A'}  
\newcommand{\magf}{B}  
\newcommand{\bfa}{\mathbf{a}}  
\newcommand{\bfb}{\mathbf{b}}
\newcommand{\bfc}{\mathbf{c}}
\newcommand{\bfx}{\mathbf{x}}
\newcommand{\euv}[1]{\mathbf{e}_{#1}}  
\newcommand{\cKf}{X}  
\newcommand{\cX}{Y}  
\newcommand{\XxcX}{Z}  
\newcommand{\wei}[1][{1}]{\ifthenelse{\equal{#1}{1}}{\vtsp\omega}{\vtsp\omega^{#1}}}  
\newcommand{\udf}{U}  
\newcommand{\rof}{V}  
\newcommand{\crf}[1][\mu]{W_{#1}}  
\newcommand{\nsi}{\alpha}  
\newcommand{\nsj}{\beta}  
\newcommand{\nsk}{\gamma}  
\newcommand{\icT}{\widehat{\cKf}}  
\newcommand{\icN}{\widehat{\XxcX}}  
\newcommand{\icB}{\widehat{\cX}}  
\newcommand{\Pvec}{P}  
\newcommand{\Nvec}{N}  
\newcommand{\pP}{\mathcal P}  
\newcommand{\ues}[1]{\theta_{#1}}  
\newcommand{\OpQ}{Q}
\newcommand{\OpS}{S}
\newcommand{\proj}[1]{\Pi_{#1}}  
\newcommand{\odD}[1]{T_{#1}}  
\newcommand{\nzX}[1][\cKf]{\Omega_{#1}}  
\newcommand{\sfX}{C^{\infty}_{\cKf}}  
\newcommand{\cssfX}{C^{\infty}_{\cKf,0}}  
\newcommand{\Rc}{\rho}  
\newcommand{\pcf}[1][\rho]{\ifthenelse{\equal{#1}{}}{\xi}{\xi_{#1}}}  
\newcommand{\Rcf}[1][\rho]{\ifthenelse{\equal{#1}{}}{\eta}{\eta_{#1}}}  
\newcommand{\ecf}[1][\epsilon]{\ifthenelse{\equal{#1}{}}{\zeta}{\zeta_{#1}}}  
\newcommand{\Lspin}{\mathcal H}  
\newcommand{\wLspin}{\mathcal H_{\cKf}}  
\newcommand{\ipd}[2]{\langle{#1},{#2}\rangle}
\newcommand{\bigipd}[2]{\bigl\langle{#1},\vtsp{#2}\bigr\rangle}
\newcommand{\wipd}[2]{\langle{#1},{#2}\rangle_{\cKf}}
\newcommand{\abs}[1]{\lvert{#1}\rvert}
\newcommand{\lrabs}[1]{\left\lvert{#1}\right\rvert}
\newcommand{\bigabs}[1]{\bigl\lvert{#1}\bigr\rvert}
\newcommand{\norm}[1]{\lVert{#1}\rVert}
\newcommand{\wnorm}[1]{\lVert{#1}\rVert_{\cKf}}
\title{On the non-existence of zero modes}
\author{Daniel M.~Elton}
\subjclass{35J46, 35P20, 35Q40, 81Q10}
\keywords{Weyl-Dirac operator, zero modes}
\begin{document}

\maketitle

\begin{abstract}
We consider magnetic fields on $\R^3$ which are parallel to a conformal Killing field. 
When the latter generates a simple rotation we show that a Weyl-Dirac operator with such a magnetic field
cannot have a zero mode. In particular this allows us to expand the class of non zero mode producing 
magnetic fields to include examples of non-trivial smooth compactly supported fields. 
\end{abstract}

\section{Introduction}

Given a magnetic potential $\magp$ on $\R^3$ we can consider the Weyl-Dirac operator
\[
\Dirac=\sigma.(-\ri\nabla-\magp)
\]
where $\sigma=(\sigma_1,\sigma_2,\sigma_3)$ are the Pauli matrices
and $\nabla=(\nabla_1,\nabla_2,\nabla_3)$ is the gradient operator. 
The operator $\Dirac$ acts on \emph{spinors}, or $\C^2$ valued functions on $\R^3$. 
A standard construction (see Section \ref{sec:regZMs} for further details) 
shows that $\Dirac$ initially defined on $C^\infty_0(\R^3,\C^2)$
has a unique (unbounded) self-adjoint extension on $\Lspin=L^2(\R^3,\C^2)$; we will use the same notation 
for both operators. A \emph{zero mode} is any non-trivial $\psi\in\Lspin$ which solves
$\Dirac\psi=0$; in other words, a zero mode is an eigenfunction of $\Dirac$ with eigenvalue $0$. 

The magnetic field corresponding to $\magp$ is $\magf=\Curl\magp$. If two potentials $\magp$ and $\magp'$ generate 
the same field the corresponding Weyl-Dirac operators are unitarily (gauge) equivalent. 
In particular the existence of zero modes is determined by $\magf$.

Zero modes have been studied in a number of contexts in mathematical physics including 
the stability of matter (\cite{FLL}, \cite{LY}) and chiral gauge theories (\cite{AMN1}, \cite{AMN2}). 
Most early work concentrated on the construction of explicit examples, including the original example (\cite{LY}), 
examples with arbitrary multiplicity (\cite{AMN2}), compact support (\cite{E1}) 
and a certain geometric property (\cite{ES}; further details below). 
Some subsequent work moved toward studying the set of all zero mode producing potentials (or fields) 
within a given class of potentials. 
More precisely, for relatively short range potentials the set of zero mode producing potentials is
nowhere dense (\cite{BE1}, \cite{BE2}) 
and is generically a co-dimension $1$ sub-manifold 
(\cite{E2}; slightly different classes of potentials were considered in these works). 
In particular, if $\magp\in C^\infty\cap L^3$ the set of scalings $t\in\R$ for which 
$\Dirac[t\magp]$ has a zero mode is a discrete subset of $\R$.

To further our understanding of the set of zero mode producing potentials 
we can consider the problem in various asymptotic regimes. 
The strong field regime corresponds to large scalings $t\to+\infty$;  
a simple rescaling of the zero mode equation shows this also corresponds to the semi-classical regime.
In this context a bound can be established for the rate at which zero mode producing scalings occur (\cite{ET}),
while the leading order asymptotics are known for certain $\magp$'s (\cite{E3}). 
Our ultimate goal is to obtain the relevant leading order asymptotics for general $\magp$; 
in the present work we provide some strong limitations on
the form such asymptotics can take by giving examples of non-trivial $\magp\in C^\infty_0$ for which 
$\Dirac[t\magp]$ does not have zero modes for \emph{any} $t\in\R$.
While it is known that uni-directional magnetic fields cannot support zero modes in general (\cite{FLL}),
our examples appear to be the first known classes of localised zero mode free potentials (or fields) which are invariant under scaling. 

We consider magnetic potentials for which the corresponding magnetic field is everywhere 
parallel to a \emph{conformal Killing field}. The latter is 
a vector field $X$ which is the infinitesimal generator of a
conformal symmetry. For a general metric this is equivalent to the condition $\mathcal L_Xg=\lambda g$
where $g$ is the Riemanninan metric, $\mathcal L_X$ denotes the Lie derivative, 
and $\lambda$ is a scalar function.  
On $\R^3$ (with the standard Euclidean metric) this condition becomes
\begin{equation}
\label{eq:cKfdefpr}
\nabla_i\cKf_j+\nabla_j\cKf_i=\sfrac23\vtsp(\Div\cKf)\vtsp\delta_{ij}.
\end{equation}
Conformal Killing fields on $\R^3$ form a $10$-dimensional space 
(see Remark \ref{rem:cKfR3} and Proposition \ref{prop:gencKfchar} below). 
When $\magf=\Curl\magp$ is parallel to such a field $\Dirac$ possess extra 
symmetry properties which can be exploited to gain information about its spectrum. 
Indeed, the construction of examples of zero mode producing fields in \cite{ES}
is based on an application of these ideas to a special conformal Killing field. 
By contrast, in the present work we show that $\Dirac$ does not have a zero mode if $\magf$
is parallel to a conformal Killing field satisfying the extra condition
\begin{equation}
\label{eq:simrotcond}
\cKf.\Curl\cKf=0.
\end{equation}
Such conformal Killing fields form a $7$-dimensional algebraic subset of 
the space of all conformal Killing fields (see Proposition \ref{prop:charcKfXcX0}).

\begin{thm}
\label{thm:mainthm}
Suppose $\cKf$ is a non-trivial conformal Killing field on $\R^3$ which satisfies \eqref{eq:simrotcond}.
Let $\magp$ be a smooth magnetic potential for which the corresponding magnetic field 
$\Curl\magp$ is everywhere parallel to $\cKf$. 
Then the Weyl-Dirac operator $\Dirac$ does not have a zero mode.
\end{thm}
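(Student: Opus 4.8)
The plan is to use the conformal symmetry generated by $\cKf$ to reduce the equation $\Dirac\psi=0$ to a family of two-dimensional problems, and then to exclude a non-trivial solution of each by an energy identity. First I would record that the hypothesis gives $\magf=\Curl\magp=b\,\cKf$ for a scalar function $b$ on the open set $\nzX$ where $\cKf\neq 0$, with $b$ subject only to $\Div\magf=0$. The condition \eqref{eq:simrotcond} is precisely the Frobenius integrability condition for the plane field orthogonal to $\cKf$, so on $\nzX$ the orbits of $\cKf$ lie in a foliation by two-dimensional leaves that $\cKf$ meets orthogonally; this furnishes an adapted orthonormal frame $\icT=\cKf/\abs{\cKf}$, $\icB$, $\icN$ with $\icT$ hypersurface-orthogonal. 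Appealing to the conformal covariance of the zero-mode equation and to the classification of fields satisfying \eqref{eq:simrotcond} (Proposition~\ref{prop:charcKfXcX0}), I would then assume $\cKf$ is in the normal form of a simple rotation; there $\cKf$ is divergence-free, $b$ is constant along the (closed) orbits, and the leaves are genuine half-planes meeting along the rotation axis.

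The next step is to realise the symmetry at the operator level. Because $\cKf$ is conformal Killing and $\magf$ is parallel to $\cKf$ and invariant under its flow, the spinorial Lie derivative along $\cKf$, taken with the conformal weight appropriate to the zero-mode equation on $\R^3$, defines an operator $J$ commuting with $\Dirac$; since \eqref{eq:simrotcond} makes the flow a simple rotation with closed orbits, the eigenvalues of $J$ form a discrete (half-integer) set. It is convenient to work in a weighted spinor space $\wLspin$, the weight $\wei$ being built from $\abs{\cKf}$ and the conformal factor so that the flow acts unitarily and $\Dirac$ takes a symmetric form in the frame. Writing $\proj{\pm}=\tfrac12(1\pm\sigma\cdot\icT)$ for the pointwise projections onto the eigenspaces of $\sigma\cdot\icT$ (which commute with $J$), I would split any zero mode as $\psi=\psi_{+}+\psi_{-}$ and resolve $\Dirac=\sigma\cdot\icT\,D_{\parallel}+\mathcal{D}_{\perp}$, where $D_{\parallel}=\icT\cdot(-\ri\nabla-\magp)$ differentiates along the orbits and $\mathcal{D}_{\perp}$ is the transverse part, anticommuting with $\sigma\cdot\icT$ and acting within a single leaf.

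Fixing an eigenvalue of $J$, the operator $D_{\parallel}$ reduces on each leaf to multiplication by a real function determined by that eigenvalue and the gauge, so $\Dirac\psi=0$ becomes a first-order elliptic system on a two-dimensional leaf, namely $V_{+}\psi_{+}+L^{*}\psi_{-}=0$ and $L\psi_{+}-V_{-}\psi_{-}=0$, with $L$ a transverse (two-dimensional magnetic Dirac) operator. Eliminating $\psi_{-}$ and pairing against $\psi_{+}$ yields the energy identity $\int V_{+}\abs{\psi_{+}}^2+\int V_{-}^{-1}\abs{L\psi_{+}}^2=0$, and the analogous identity holds with the roles reversed. The heart of the argument is then to show that, in the weighted space and for every value of the discrete parameter, the coefficients $V_{\pm}$ are sign-definite, so that both terms carry the same sign and $\psi_{+}$, hence $\psi$, must vanish. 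Equivalently, via the Lichnerowicz--Pauli identity $\norm{\Dirac\psi}^2=\norm{(-\ri\nabla-\magp)\psi}^2-\ipd{(\sigma\cdot\magf)\psi}{\psi}$, a zero mode would force the kinetic energy to equal $\int b\abs{\cKf}\,(\abs{\psi_{+}}^2-\abs{\psi_{-}}^2)$, and the mode analysis is exactly what controls the sign of the right-hand side.

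The step I expect to be the main obstacle is the rigorous justification of this scheme on all of $\R^3$ rather than merely on $\nzX$. Elliptic regularity makes $\psi$ smooth, but the frame, the weight and the reduction all degenerate on the fixed-point set $\{\cKf=0\}$ (the rotation axis) and require care at infinity; the integrations by parts in the energy identities produce boundary terms on surfaces surrounding the axis and at large radius that must be shown to vanish. I would control these with a family of cutoffs supported away from $\{\cKf=0\}$ and from infinity, estimating the resulting commutators by the $L^2$ norms of $\psi$ and $(-\ri\nabla-\magp)\psi$, and pin down the admissible behaviour of each $J$-mode at the axis from the regularity of $\psi$ there. Establishing the sign of the $V_{\pm}$ uniformly over the modes, together with this boundary analysis, is where essentially all of the work lies; the geometric reduction itself is comparatively formal.
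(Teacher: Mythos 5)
Your overall framework---a symmetry operator $J$ differentiating along $\cKf$ that commutes with $\Dirac$, spin projections $\proj{\pm}$, a weighted $L^2$ space, and exploitation of the closed orbits---is essentially the paper's (there $J$ is the operator $\OpQ$ of Section \ref{sec:symm} and the weight is $\wei=\abs{\cKf}$). But the step you yourself call the heart of the argument is precisely where the proposal has a genuine gap, and the mechanism you propose for it would fail. You assert that the eigenvalues of $J$ along a closed orbit form a half-integer set, equivalently that the coefficients $V_{\pm}$ in your reduced system are sign-definite ``uniformly over the modes''. Pointwise sign-definiteness of $V_{\pm}$ is unobtainable: along a closed orbit $\gamma$ of period $\tau$ these coefficients contain the term $\cKf.\magp$, which is an arbitrary smooth function of $t$ whose sign and size the hypotheses do not control. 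What is true, and what the paper proves, is an integral (holonomy) statement: single-valuedness of each mode around $\gamma$ forces $\int_0^\tau\bigl(\frac14\abs{\cX}-\cKf.\magp-\lambda\bigr)\,\rd t\in2\pi\Z$, and turning this into $\lambda\neq0$ requires three separate facts, none of which appears in your proposal: the closed orbits are planar (Lemma \ref{lem:simcKfplanic}); the flux term $\int_0^\tau\cKf.\magp\,\rd t$ vanishes because $\magf$ is parallel to $\cKf$, hence tangent to the plane of the orbit, so Kelvin--Stokes gives \eqref{eq:cloloopflux0}; and the curvature integral equals $\pm2\pi$ by the Hopf Umlaufsatz, giving $\int_0^\tau\frac14\abs{\cX}\,\rd t=\pi$ as in \eqref{eq:HUevalint}. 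Only the combination yields $\lambda\in(2\Z+1)\tau^{-1}\pi$ (Proposition \ref{prop:specQ}). These are not formalities: if the flux through some orbit were congruent to $\pi$ modulo $2\pi$, then $0$ would be an eigenvalue and the whole scheme would collapse; it is exactly the flux-vanishing argument that rules this out. Your alternative Lichnerowicz route has the same defect, since $b$ in $\magf=b\,\cKf$ carries no sign. (Note also that the paper never needs to decompose into $J$-modes: Proposition \ref{prop:normspindecomp} gives $\wnorm{\Dirac\wei\phi}^2=\wnorm{\odD{+}\phi}^2+\wnorm{\odD{-}\phi}^2+\wnorm{\OpQ\phi}^2$, so a zero mode forces $\OpQ\phi=0$ outright, which contradicts Proposition \ref{prop:specQ} on any closed orbit where $\phi$ is non-trivial.)

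There are also structural errors in your reduction that matter. Among the admissible fields (Proposition \ref{prop:charcKfXcX0} after discarding fields with isolated fixed points) is the third type $\crf$, which is not divergence-free ($\Div\crf=3x_3$) and along whose orbits the scalar $b$ is not constant; your normal-form assumptions exclude it. Fields with isolated fixed points (the monopole and dipole types) have no closed orbits at all, so an orbit-based argument is silent about them; the paper disposes of them through Lemma \ref{lem:nofixedptB}, which shows that a smooth divergence-free field parallel to such a $\cKf$ must vanish identically. Your Frobenius picture is moreover internally inconsistent: the orbits of $\cKf$ cannot lie in leaves orthogonal to $\cKf$; what is true, and what the paper proves and uses, is that the orbits lie in planes orthogonal to $\cX=\Curl\cKf$. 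The analytic issues at $\{\cKf=0\}$ and at infinity that you defer are real but comparatively routine; the paper resolves them with explicit log-log cutoffs and Fatou's lemma in Section \ref{sec:regZMs}.
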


\begin{rem}
Clearly any constant vector field is a conformal Killing field satisfying \eqref{eq:simrotcond}. 
The previously known fact that Weyl-Dirac operators corresponding to uni-directional magnetic fields cannot have 
zero modes (\cite{FLL}) is thus a special case of Theorem \ref{thm:mainthm}. 
\end{rem}

If $f\in C^\infty(\R^2)$ then $\magp(\bfx)=f(x_1^2+x_2^2,x_3)\vtsp(0,0,1)$ is a smooth magnetic potential 
with $\magf=\Curl\magp=-2f_1(x_1^2+x_2^2,x_3)\cKf$ 
where $f_1$ is the derivative of $f$ with respect to its first variable and $\cKf=(-x_2,x_1,0)$
is a conformal Killing field satisfying \eqref{eq:simrotcond}. 
It follows that $\Dirac$ cannot have a zero mode. The class of such fields is 
invariant under scaling. Further details are given in Example \ref{ex:ro} below; 
another explicit class of fields is discussed in Example \ref{ex:cr}.

\begin{cor}
\label{cor:cinfty0potnZMs}
There exist non-trivial $\magp\in C^\infty_0$ such that $\Dirac[tA]$ does not have a zero mode 
for any $t\in\R$.
\end{cor}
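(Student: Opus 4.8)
The plan is to specialise the explicit family of potentials introduced immediately above, taking the profile function $f$ to be compactly supported. Concretely, I would fix any non-trivial $f\in C^\infty_0(\R^2)$ and set $\magp(\bfx)=f(x_1^2+x_2^2,x_3)\,(0,0,1)$. Since the map $(x_1,x_2,x_3)\mapsto(x_1^2+x_2^2,x_3)$ is smooth, $\magp$ is smooth; moreover $\magp(\bfx)\neq0$ only when $(x_1^2+x_2^2,x_3)\in\supp f$, so $\supp\magp$ is contained in the preimage of $\supp f$, which is a bounded solid of revolution about the $x_3$-axis. Being closed and bounded this preimage is compact, whence $\magp\in C^\infty_0(\R^3,\R^3)$.

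Next I would confirm that $\magp$ is genuinely non-trivial, in the sense that its magnetic field does not vanish identically. The computation recorded above gives $\Curl\magp=-2f_1(x_1^2+x_2^2,x_3)\,\cKf$ with $\cKf=(-x_2,x_1,0)$. If $f_1\equiv0$ then $f$ depends only on its second argument, i.e.\ $f(s,x_3)=h(x_3)$, and such a function on $\R^2$ can have compact support only if it vanishes identically. Hence for any non-trivial compactly supported $f$ we automatically have $f_1\not\equiv0$, so the field is non-trivial.

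The key observation is that scaling preserves this structure. For each $t\in\R$ the potential $t\magp$ is again of the same form, now with profile $tf\in C^\infty_0(\R^2)$, and its field $\Curl(t\magp)=-2t\vtsp f_1(x_1^2+x_2^2,x_3)\,\cKf$ is everywhere parallel to the fixed conformal Killing field $\cKf=(-x_2,x_1,0)$. Since $\cKf$ satisfies \eqref{eq:simrotcond}, Theorem \ref{thm:mainthm} applies to $t\magp$ for every $t$, and therefore $\Dirac[t\magp]$ has no zero mode for any $t\in\R$, which is the assertion of the corollary.

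There is no substantive obstacle here: all the real work is carried by Theorem \ref{thm:mainthm}, and the two points needing care are purely verificational, namely that $\magp$ is compactly supported (which follows from the boundedness of the solid-of-revolution support) and that a suitable $f$ makes the example non-trivial. A convenient explicit choice is $f(s,x_3)=\phi(s)\vtsp\psi(x_3)$ for compactly supported bump functions $\phi,\psi$ with $\phi$ non-constant.
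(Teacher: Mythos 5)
Your construction is exactly the paper's: the corollary is obtained by specialising the family $\magp=f(x_1^2+x_2^2,x_3)\,\euv{3}$ discussed immediately before its statement to compactly supported $f$, observing that the class is invariant under scaling ($t\magp$ has profile $tf$), and applying Theorem \ref{thm:mainthm} with the conformal Killing field $\cKf=(-x_2,x_1,0)$, which satisfies \eqref{eq:simrotcond}. One verificational step is stated too strongly, however: from $f\not\equiv0$ you infer $f_1\not\equiv0$ on $\R^2$ and conclude the example is non-trivial, but the potential and field only sample $f$ at points whose first argument $s=x_1^2+x_2^2$ is non-negative (and the factor $(-x_2,x_1,0)$ vanishes on the $x_3$-axis), so an $f$ supported in $\{s<0\}$ is non-trivial with $f_1\not\equiv0$ yet gives $\magp\equiv0$ and $\magf\equiv0$; even your explicit product choice $f(s,x_3)=\phi(s)\psi(x_3)$ fails if $\supp\phi\subset(-\infty,0)$, and ``$\phi$ non-constant'' is vacuous since every non-zero compactly supported function is non-constant. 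The fix is immediate and costs nothing: choose $\phi$ to be a non-zero bump supported in $(0,\infty)$ (more generally, require $f$ to be non-constant in $s$ on $\{s\ge0\}$); then $f_1(s_0,z_0)\neq0$ for some $s_0>0$, so $\magf=-2f_1(x_1^2+x_2^2,x_3)\,(-x_2,x_1,0)\not\equiv0$ away from the axis, and in particular $\magp\not\equiv0$, which is the non-triviality the corollary asserts.
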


\begin{rem}
\label{rem:cKfR3}
The space of conformal Killing fields is the Lie algebra corresponding to the Lie group of conformal transformations.
On $\R^3$ Liouville's Theorem on conformal mappings shows the latter is the same as the group of M\"{o}bius transformations 
(see \cite{Bl}, for example). In turn, this is just the (identity component of the) 
$10$-dimensional indefinite orthogonal group $O(1,4)$ (see \cite{Be}; this group is also the symmetry group of de Sitter space). 
Below we give an elementary direct argument to determine the conformal Killing fields on $\R^3$ and identify 
those which satisfy \eqref{eq:simrotcond} (see Propositions \ref{prop:gencKfchar} and \ref{prop:charcKfXcX0}).
\end{rem}

\begin{rem}
\label{rem:spheredis}
Stereographic projection gives a conformal equivalence between the $3$-dimensional sphere $\S^3$ and $\R^3$;
the corresponding spaces of conformal Killing fields are then isomorphic. It follows that rotations of $\S^3$, 
or equivalently $\R^4$, give rise to conformal Killing fields on $\R^3$. 
A general rotation of $\R^4$ can be decomposed into independent rotations in a pair of orthogonal $2$-dimensional planes.
An \emph{isoclinic rotation} of $\R^4$ is one in which the angles of the independent rotations are equal;
the corresponding conformal Killing fields on $\R^3$ underlie the work of \cite{ES}.
On the other hand, a \emph{simple rotation} of $\R^4$ is one in which one of the independent rotations is trivial;
the corresponding conformal Killing fields on $\R^3$ satisfy \eqref{eq:simrotcond}.
See Remark \ref{rem:isosimrot} for some further discussion.

The conformal invariance of the Dirac operator means we could view our results and arguments 
in the setting of $\S^3$. Such a viewpoint is partially adopted in \cite{ES} and \cite{E3}, 
but requires a substantial amount of differential geometric language. 
In the present work we aim to keep arguments more accessible by working in $\R^3$
using direct calculations. 
\end{rem}

Key to our argument is the fact that, when $\magf$ is parallel to a conformal Killing field $\cKf$,
it is possible to differentiate spinors in the direction parallel to $\cKf$ in a way that commutes with $\Dirac$;
this directional derivative is given by the operator $\OpQ$ defined at the start of Section \ref{sec:symm}, while the 
commutativity property is given in Proposition \ref{prop:baspropDQS}. 
When $\cKf$ satisfies \eqref{eq:simrotcond} and our spinor is a zero mode of $\Dirac$ we can further show this directional 
derivative must be $0$; Proposition \ref{prop:normspindecomp} gives the necessary norms bounds, 
while regularity issues associated to the applicability of these bounds are dealt with in Section \ref{sec:regZMs}.
On the other hand, by studying $\OpQ$ on the closed integral curves of $\cKf$, we can show this operator cannot have 
$0$ as an eigenvalue; see Proposition \ref{prop:specQ}. Preliminaries for the analysis of $\OpQ$ along the integral curves
are covered in Section \ref{sec:srot}. Various identities relating to conformal Killing fields are collected in Section \ref{sec:cKfgen},
while in Section \ref{sec:CharAdmFields} we explicitly determine the fields satisfying \eqref{eq:simrotcond} and eliminate those which cannot
be parallel to a magnetic field. The various pieces of our argument are pulled together in Section \ref{sec:pfthm1}.

\subsubsection*{Notation}

In general vectors in $\R^3$ are denoted by upper case or bold lower case letters;
for example $X=(X_1,X_2,X_3)$ and $\bfx=(x_1,x_2,x_3)$. 
Also $\euv{3}=(0,0,1)$ denotes the unit vector parallel to the $x_3$-axis.
The Roman letters $i,j,k,\dots$ are used for indices taking values in $\{1,2,3\}$; 
repeated indices imply summation over this set.
For any vector field $X$ we set $\nabla_X=X.\nabla=X_i\nabla_i$. 
Let $\delta_{ij}$ denote the Kronecker delta and $\epsilon_{ijk}$ the totally anti-symmetric tensor;
in particular $\sigma_i\sigma_j=\delta_{ij}I_2+\ri\epsilon_{ijk}\sigma_k$, so
\begin{equation}
\label{eq:PauliMatId}
(\bfa.\sigma)(\bfb.\sigma)=\bfa.\bfb I_2+\ri(\bfa\times\bfb).\sigma,
\quad\text{$\bfa,\bfb\in\R^3$.}
\end{equation}
Note that $\epsilon_{ijk}\epsilon_{lmk}=\delta_{il}\delta_{jm}-\delta_{im}\delta_{jl}$,
which leads to the vector triple product formula
\begin{equation}
\label{eq:vectripleprod}
\bfa\times(\bfb\times\bfc)=(\bfa.\bfc)\bfb-(\bfa.\bfb)\bfc,
\quad\text{$\bfa,\bfb,\bfc\in\R^3$.}
\end{equation}

We use $\abs{\cdot}$ for the standard norm in any finite dimensional vector space. 
The standard inner products in $\R^3$ and $\C^2$ will be denoted by a dot and 
$\ipd{\cdot}{\cdot}$ respectively.
For a sequence $\{\psi_n\}_{n\in\N}$ indexed by $\N$ we simply write $\psi_n\to\psi$ 
to indicate a limit as $n\to\infty$.

\section{Conformal Killing fields}
\label{sec:cKfgen}

Let $\cKf$ be a conformal Killing field on $\R^3$. 
Set $\wei=\abs{\cKf}$, $\cX=\Curl\cKf$ and $\XxcX=\cKf\times\cX$.
In this section we collect some identities for $\cKf$, $\cX$ and $\XxcX$ which 
follow directly from \eqref{eq:cKfdefpr}. 
A number of these involve powers of $\wei$; in such cases we consider the identities on the open set 
\begin{equation}
\label{eq:nzscKf}
\nzX=\R^3\setminus\{\bfx\in\R^3:\wei(\bfx)=0\}.
\end{equation}

From \eqref{eq:cKfdefpr} we get
\begin{equation}
\label{eq:XDabsXa}
\nabla_\cKf\wei[\alpha]
=\sfrac{\alpha}{2}\wei[\alpha-2]\nabla_\cKf\wei[2]
=\sfrac{\alpha}{2}\wei[\alpha-2]\cKf_i\cKf_j(\nabla_i\cKf_j+\nabla_j\cKf_i)
=\sfrac{\alpha}{3}(\Div\cKf)\wei[\alpha]
\end{equation}
for any $\alpha\in\R$.
Multiplying \eqref{eq:cKfdefpr} by $\cKf_i$ and summing over $i$ also leads to
\begin{equation}
\label{eq:XDXX2DivX}
\nabla_\cKf\cKf=-\sfrac12\nabla\wei[2]+\sfrac23(\Div\cKf)\cKf,
\end{equation}
and thus
\begin{equation}
\label{eq:XxDX2}
(\cKf\times\nabla)\wei[2]=-2\cKf\times\nabla_\cKf\cKf.
\end{equation}
Using \eqref{eq:vectripleprod} 
\begin{equation}
\label{eq:XcZexpr}
\cKf\times\XxcX=(\cKf.\cX)\cKf-\wei[2]\cX.
\end{equation}
Since $\cX=\nabla\times\cKf$ we can use the same identity and \eqref{eq:XDXX2DivX} to get
\begin{equation}
\label{eq:XxCurlX}
\XxcX=\cKf\times\cX=\cKf_i\nabla\cKf_i-\nabla_\cKf\cKf
=-2\nabla_\cKf\cKf+\sfrac23(\Div\cKf)\cKf,
\end{equation}
so $\cKf\times\XxcX=-2\cKf\times\nabla_\cKf\cKf=(\cKf\times\nabla)\wei[2]$
with the help of \eqref{eq:XxDX2}. 
Comparing this with \eqref{eq:XcZexpr} we now get
\begin{equation}
\label{eq:altCurlX}
(\cKf\times\nabla)\wei
=\sfrac12\wei[-1](\cKf\times\nabla)\wei[2]
=\sfrac12\wei[-1](\cKf.\cX)\cKf-\sfrac12\wei\cX.
\end{equation}
We can also rewrite \eqref{eq:XxCurlX} to get
\begin{equation}
\label{eq:genexprXDX}
\nabla_\cKf\cKf=\sfrac13(\Div\cKf)\cKf-\sfrac12\XxcX.
\end{equation}
Comparison with \eqref{eq:XDXX2DivX} then leads to
\begin{equation}
\label{eq:genexprDX2}
\nabla\wei[2]
=\sfrac23(\Div\cKf)\cKf+\XxcX.
\end{equation}

Using \eqref{eq:cKfdefpr} twice we get
\begin{align}
&\nabla_i\cX_j+\nabla_j\cX_i
=\epsilon_{jkl}\nabla_k\nabla_i\cKf_l+\epsilon_{ikl}\nabla_k\nabla_j\cKf_l\nonumber\\
&\qquad{}=-\epsilon_{jkl}\nabla_k\nabla_l\cKf_i+\sfrac23\epsilon_{jkl}\nabla_k(\Div\cKf)\delta_{il}
-\epsilon_{ikl}\nabla_k\nabla_l\cKf_j+\sfrac23\epsilon_{ikl}\nabla_k(\Div\cKf)\delta_{jl}\nonumber\\
\label{eq:CurlXKf}
&\qquad{}=\sfrac23\epsilon_{jki}\nabla_k(\Div\cKf)+\sfrac23\epsilon_{ikj}\nabla_k(\Div\cKf)=0,
\end{align}
while $\Div\cX=\Div\Curl\cKf=0$. It follows that $\cX$ is a Killing field 
(that is, satisfies \eqref{eq:cKfdefpr} with $0$ on the right hand side).

Now suppose $\{\nsi,\nsj,\nsk\}=\{1,2,3\}$. 
Then, with no summation over indices,
\[
Y_{\nsi}\bigl(\nabla_{\nsk}\cKf_{\nsi}-\nabla_{\nsi}\cKf_{\nsk}\bigr)
=\epsilon_{\nsi\nsj\nsk}Y_{\nsi}Y_{\nsj}
=\bigl(\nabla_{\nsj}\cKf_{\nsk}-\nabla_{\nsk}\cKf_{\nsj}\bigr)Y_{\nsj}.
\]
Hence
\[
Y_{\nsi}\nabla_{\nsk}\cKf_{\nsi}+Y_{\nsj}\nabla_{\nsk}\cKf_{\nsj}+Y_{\nsk}\nabla_{\nsk}\cKf_{\nsk}
=Y_{\nsi}\nabla_{\nsi}\cKf_{\nsk}+Y_{\nsj}\nabla_{\nsj}\cKf_{\nsk}+Y_{\nsk}\nabla_{\nsk}\cKf_{\nsk}
\]
or, returning to the summation convention,
\begin{equation}
\label{eq:CurlXDX}
\cX_i\nabla\cKf_i=\nabla_\cX\cKf.
\end{equation}
Combined with \eqref{eq:CurlXKf} we then get
\[
\nabla(\cKf.\cX)
=\cX_i\nabla\cKf_i+\cKf_i\nabla\cX_i
=\nabla_\cX\cKf-\nabla_\cKf\cX
\]
(the Lie bracket of $\cKf$ and $\cX$).
On the other hand, \eqref{eq:CurlXDX} and \eqref{eq:cKfdefpr} give
\[
\nabla_\cX\cKf_k
=\cX_i\nabla_k\cKf_i
=-\nabla_\cX\cKf_k+\sfrac23(\Div\cKf)\cX_k.
\]
Thus $\nabla_\cX\cKf=\frac13(\Div\cKf)\cX$ and hence
\begin{equation}
\label{eq:genLXCurlX}
\nabla_\cKf\cX=\sfrac13(\Div\cKf)\cX-\nabla(\cKf.\cX).
\end{equation}

By \eqref{eq:cKfdefpr}
\begin{equation}
\label{eq:DeltacKf}
\Delta\cKf_j=\nabla_i\nabla_i\cKf_j
=-\nabla_i\nabla_j\cKf_i+\sfrac23\nabla_j(\Div\cKf)
=-\sfrac13\nabla_j(\Div\cKf),
\end{equation}
and hence
\[
\nabla_k\cX_l
=\epsilon_{lmn}\nabla_m\nabla_k\cKf_n
=-\epsilon_{lmn}\nabla_m\nabla_n\cKf_k+\sfrac23\epsilon_{lmk}\nabla_m(\Div\cKf)
=-2\epsilon_{klm}\Delta\cKf_m.
\]
Thus
\begin{equation}
\label{eq:XDCurlXXxDelX}
\nabla_\cKf\cX
=2\cKf\times\Delta\cKf.
\end{equation}
Since $\XxcX_k=\epsilon_{kij}\cKf_i\cX_j$ and $\epsilon_{kij}\epsilon_{klm}=\delta_{il}\delta_{jm}-\delta_{im}\delta_{jl}$ 
we also get
\begin{equation}
\label{eq:XxCurlXDCurlX}
\nabla_\XxcX\cX_l
=-2\cKf_l\cX_m\Delta\cKf_m+2\cKf_m\Delta\cKf_m\cX_l
=2(\cKf.\Delta\cKf)\cX_l-2(\cX.\Delta\cKf)\cKf_l.
\end{equation}

\section{Simple rotations}
\label{sec:srot}

We now suppose the conformal Killing field $\cKf$ satisfies \eqref{eq:simrotcond}, namely $\cKf.\cX=0$.
Then $\cKf$, $\cX$ and $\XxcX$ are mutually orthogonal while
\begin{equation}
\label{eq:nXxcXXcZY}
\abs{\XxcX}=\wei\abs{\cX}\quad\text{and}\quad
\cKf\times\XxcX=-\wei[2]\cX
\end{equation}
by \eqref{eq:XcZexpr}.
Thus the normalised vector fields 
$\icT=\wei[-1]\cKf$, $\icB=\abs{\cX}^{-1}\cX$ and $\icN=\abs{\XxcX}^{-1}\XxcX$
provide an orthonormal frame on $\nzX\cap\nzX[\cX]$. 
Also \eqref{eq:genexprDX2} gives
\begin{equation}
\label{eq:simexprDX2}
\abs{\nabla\wei}^2
=\sfrac14\wei^{-2}\abs{\nabla\wei[2]}^2
=\sfrac19(\Div\cKf)^2+\sfrac14\abs{\cX}^2,
\end{equation}
while \eqref{eq:XDCurlXXxDelX} and \eqref{eq:genLXCurlX} lead to
\[
\cKf\times\Delta\cKf=\sfrac12\nabla_\cKf\cX=\sfrac16(\Div\cKf)\cX
\quad\Longrightarrow\quad
\cKf\times(\cKf\times\Delta\cKf)=\sfrac16(\Div\cKf)\XxcX.
\]
However $\cKf\times(\cKf\times\Delta\cKf)=(\cKf.\Delta\cKf)\cKf-\wei[2]\Delta\cKf$ by \eqref{eq:vectripleprod}, so
\[
\wei[2]\cX.\Delta\cKf
=\cX.\Bigl[(\cKf.\Delta\cKf)\cKf-\sfrac16(\Div\cKf)\XxcX\Bigr]
=0.
\]
Thus \eqref{eq:genLXCurlX} and \eqref{eq:XxCurlXDCurlX} simplify to
\begin{equation}
\label{eq:DXYDZY}
\nabla_\cKf\cX=\sfrac13(\Div\cKf)\cX
\quad\text{and}\quad
\nabla_{\XxcX}\cX
=2(\cKf.\Delta\cKf)\cX,
\end{equation}
so $\nabla_\cKf\cX$ and $\nabla_\XxcX\cX$ are parallel to $\cX$. It follows that
$\nabla_{\icT}\icB=0=\nabla_{\icN}\icB$ on $\nzX\cap\nzX[\cX]$.
If $\Pvec\in\R^3$ we have $\Pvec=(\Pvec.\icT)\icT+(\Pvec.\icN)\icN+(\Pvec.\icB)\icB$, and hence
\begin{equation}
\label{eq:DPhatY}
\nabla_{\Pvec}\icB=(\Pvec.\icB)\nabla_{\icB}\icB.
\end{equation}
Note that $\nabla_{\icB}\icB$ is smooth on $\nzX\cap\nzX[\cX]$ so bounded by a constant
$C_K$ on any compact set $K\subset\nzX\cap\nzX[\cX]$.

\begin{lem}
\label{lem:simcKfplanic}
Let $\bfx_0\in\nzX\cap\nzX[\cX]$. Set $\Nvec=\icB(\bfx_0)$ and $\pP=\{\bfx\in\R^3:(\bfx-\bfx_0).\Nvec=0\}$ 
(the plane through $\bfx_0$ with normal $\Nvec$).
Then $\cKf(\bfx).\Nvec=0$ for all $\bfx\in\pP$.
\end{lem}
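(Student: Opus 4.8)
The plan is to show that the unit field $\icB$ is constant, equal to $\Nvec$, on (the relevant part of) the plane $\pP$, and then to read off the conclusion from the orthogonality $\cKf\cdot\cX=0$ built into \eqref{eq:simrotcond}. Indeed, wherever $\icB=\abs{\cX}^{-1}\cX$ is defined we have $\cKf\cdot\icB=\abs{\cX}^{-1}\cKf\cdot\cX=0$, so once we know $\icB(\bfx)=\Nvec$ we immediately obtain $\cKf(\bfx)\cdot\Nvec=\cKf(\bfx)\cdot\icB(\bfx)=0$. The whole difficulty is therefore transferred to proving that $\icB$ does not vary as we move within $\pP$, and the tool for this is the pointwise identity \eqref{eq:DPhatY}, which says that $\nabla_{\Pvec}\icB$ vanishes as soon as $\Pvec$ is orthogonal to $\icB$.

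First I would fix $\bfx\in\pP$ and join it to $\bfx_0$ by the straight segment $\gamma(t)=\bfx_0+t\Pvec$, $t\in[0,1]$, where $\Pvec=\bfx-\bfx_0$; since $\bfx,\bfx_0\in\pP$ we have $\Pvec\cdot\Nvec=0$. On the relatively open set of $t$ for which $\gamma(t)$ stays in $\nzX\cap\nzX[\cX]$ I would study the scalar $v(t)=\Pvec\cdot\icB(\gamma(t))$. Since $\Pvec$ is constant and $\gamma'(t)=\Pvec$, differentiating and applying \eqref{eq:DPhatY} along $\gamma$ gives
\[
v'(t)=\Pvec\cdot\nabla_{\Pvec}\icB=\bigl(\Pvec\cdot\icB\bigr)\bigl(\Pvec\cdot\nabla_{\icB}\icB\bigr)=a(t)\,v(t),
\qquad a(t):=\Pvec\cdot\nabla_{\icB}\icB(\gamma(t)).
\]
Because $\nabla_{\icB}\icB$ is smooth, hence bounded on compact subintervals (the constant $C_K$ recorded after \eqref{eq:DPhatY}), the coefficient $a$ is continuous and this is a linear homogeneous ODE. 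Its initial value is $v(0)=\Pvec\cdot\icB(\bfx_0)=\Pvec\cdot\Nvec=0$, so uniqueness forces $v\equiv0$ on the connected $t$-interval about $0$. Feeding $v\equiv0$ back into \eqref{eq:DPhatY} yields $\tfrac{\rd}{\rd t}\icB(\gamma(t))=v(t)\,\nabla_{\icB}\icB=0$, so $\icB$ is constant along $\gamma$ and equals $\icB(\bfx_0)=\Nvec$. In particular, whenever the \emph{entire} segment $\gamma([0,1])$ lies in $\nzX\cap\nzX[\cX]$ we get $\icB(\bfx)=\Nvec$ and hence $\cKf(\bfx)\cdot\Nvec=0$.

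This already proves the lemma on the star-shaped neighbourhood of $\bfx_0$ in $\pP$ consisting of points reachable from $\bfx_0$ by such a ``good'' segment. The step I expect to be the main obstacle is the passage to \emph{all} of $\pP$: the segment to a distant $\bfx\in\pP$ may meet the zero sets of $\wei$ or of $\abs{\cX}$, where $\icB$ is undefined and the ODE breaks down. To close this gap I would invoke the real-analyticity of $\cKf$, which follows from \eqref{eq:DeltacKf} (a further divergence shows $\Div\cKf$ is harmonic, whence each $\cKf_j$ is biharmonic and so real-analytic). Then the restriction of the smooth function $\bfx\mapsto\cKf(\bfx)\cdot\Nvec$ to the connected plane $\pP$ is real-analytic; since it vanishes on a nonempty open subset of $\pP$ it must vanish identically on $\pP$, which is the assertion. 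Alternatively, one can check that the exceptional set meets $\pP$ in a set so small that the good segments reach a dense subset of $\pP$, and then conclude by continuity.
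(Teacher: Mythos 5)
Your proof is correct, and its first half coincides with the paper's: both run the same linear ODE (Gronwall) argument for $v(t)=\Pvec.\icB(\gamma(t))$ along straight segments in $\pP$, using \eqref{eq:DPhatY} and $v(0)=0$ to conclude that $\icB\equiv\Nvec$, hence $\cKf.\Nvec=0$, along any segment from $\bfx_0$ staying inside $\nzX\cap\nzX[\cX]$. The difference is in how you globalise. The paper works with $\cX$: after extending the segment argument to the whole path component $\pP_0$ of $\nzX\cap\nzX[\cX]\cap\pP$ containing $\bfx_0$, it observes that $\Pvec.\cX$ is harmonic for each $\Pvec\perp\Nvec$ (since $\cX$ is a Killing field), applies unique continuation to conclude that $\cX$ is parallel to $\Nvec$ on all of $\pP$, deduces $\cKf.\Nvec=0$ on the dense subset $\nzX[\cX]\cap\pP$ from \eqref{eq:simrotcond}, and finishes by continuity of $\cKf.\Nvec$. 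You instead apply unique continuation directly to $\cKf.\Nvec$: you derive real-analyticity of $\cKf$ from \eqref{eq:DeltacKf} (taking a divergence gives $\Delta(\Div\cKf)=0$, so $\Delta^2\cKf_j=0$, and biharmonic functions are real-analytic), restrict to $\pP$, and invoke the identity theorem on the nonempty open (in $\pP$) star-shaped set where the segment argument applies. This is a legitimate and slightly more economical route: it needs only a small neighbourhood of $\bfx_0$ in $\pP$ rather than all of $\pP_0$, and it collapses the paper's two final steps (parallelism of $\cX$ on $\pP$, then density plus continuity) into a single application of analyticity. Both proofs ultimately rest on the same principle---real-analytic unique continuation within the plane---applied to different fields: $\cKf$ in your argument, $\cX$ in the paper's.

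One warning: your closing ``alternatively'' remark is not viable and should be deleted. The exceptional set can disconnect $\pP$, so the points reachable by good segments need not be dense in $\pP$. For example, for $\cKf=\crf$ one has $\cX=2\vtsp\euv{3}\times\bfx$, whose zero set is the $x_3$-axis; this axis lies inside every plane $\pP$ arising in the lemma and splits it into two half-planes, and segments from $\bfx_0$ can never leave the half-plane containing $\bfx_0$. Some form of unique continuation is therefore genuinely needed to get across the exceptional set; your analyticity argument achieves this, while the density idea does not.
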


\begin{proof}
Suppose $\Pvec.\Nvec=0$ for some $\Pvec\in\R^3$. Set $\gamma(t)=\bfx_0+t\Pvec\in\pP$ for $t\in\R$, 
and choose $\tau>0$ so that $K=\gamma([0,\tau])\subset\nzX\cap\nzX[\cX]$.
By \eqref{eq:DPhatY}
\begin{equation}
\label{eq:ddsNinPi}
\frac{\rd}{\rd t}\icB(\gamma(t))=\nabla_{\gamma'(t)}\icB(\gamma(t))
=\Pvec.\icB(\gamma(t))\,\nabla_{\icB}\icB(\gamma(t))
\end{equation}
and hence
\[
\lrabs{\frac{\rd}{\rd t}\Pvec.\icB(\gamma(t))}\le C_K\abs{\Pvec}\bigabs{\Pvec.\icB(\gamma(t))}.
\]
Since $\Pvec.\icB(\gamma(0))=\Pvec.\Nvec=0$ it follows that $\Pvec.\icB(\gamma(t))=0$ for all $t\in[0,\tau]$. 
Hence the right hand side of \eqref{eq:ddsNinPi} is $0$, and so $\icB(\bfx)=\icB(\gamma(0))=\Nvec$ 
for all $\bfx\in\gamma([0,\tau])$. This clearly extends to all $\bfx$ in $\pP_0$, the path connected component of
$\nzX\cap\nzX[\cX]\cap\pP$ containing $\bfx_0$. 
Thus $\Pvec.\cX=0$ on $\pP_0$. However $\Pvec.\cX$ is harmonic 
(this follows easily from the fact that $\cX$ is a Killing field). Unique continuation (see \cite[Theorem XIII.63]{RS4}, for example) now implies 
$\Pvec.\cX=0$ on $\pP$. Since this holds for all $\Pvec\in\R^3$ with $\Pvec.\Nvec=0$ we must have that $\cX$ 
is parallel to $\Nvec$ on $\pP$, so $\cKf.\Nvec=0$ on $\nzX[\cX]\cap\pP$ by \eqref{eq:simrotcond}.
However $\cKf.\Nvec$ is smooth while $\nzX[\cX]\cap\pP$ is dense in $\pP$ 
(again by unique continuation). Thus $\cKf.\Nvec=0$ on $\pP$.
\end{proof}

We now consider integral curves of the vector field $\cKf$ 
(these are the magnetic field lines when $\magf$ is parallel to $\cKf$).
Suppose $\gamma:I\to\R^3$ is such a curve. Then $\gamma'(t)=\cKf(\gamma(t))$
and $\gamma''(t)=(\rd/\rd t)\cKf(\gamma(t))=\nabla_\cKf\cKf(\gamma(t))$ so,
using \eqref{eq:genexprXDX} and \eqref{eq:nXxcXXcZY},
\begin{equation}
\label{eq:altCurlXcp}
\gamma'(t)\times\gamma''(t)
=\cKf\times\nabla_\cKf\cKf
=-\sfrac12\cKf\times\XxcX
=\sfrac12\wei[2]\cX.
\end{equation}
Likewise $(\rd/\rd t)\cX(\gamma(t))=\nabla_\cKf\cX(\gamma(t))=f(t)\,\cX(\gamma(t))$
where $f(t)=\frac13\Div\cKf(\gamma(t))$ by \eqref{eq:DXYDZY}.
As $f$ is smooth it follows that, on $\gamma$, $\cX$ is either nowhere $0$  
or identically equal to $0$. In the latter case \eqref{eq:altCurlXcp} then implies $\gamma''(t)$
is parallel to $\gamma'(t)$; hence $\gamma$ is a straight line so, in particular, 
cannot be closed.

Now suppose $\gamma$ is a closed integral curve of $\cKf$ with (minimal) period $\tau$. 
Thus $\gamma$ lies in $\nzX\cap\nzX[\cX]$. The fact that $(\rd/\rd t)\cX(\gamma(t))$ 
is parallel to $\cX(\gamma(t))$ then implies $\icB$ is constant on $\gamma$.
Denote this constant vector by $\Nvec$ and let $\pP$ be as given in Lemma \ref{lem:simcKfplanic}
for some $\bfx_0$ lying on $\gamma$. It follows that $\gamma$ lies in the plane $\pP$.

We need the integrals of three quantities around $\gamma$.
Using \eqref{eq:XDabsXa} we get
\[
(\Div\cKf)(\gamma(t))
=3(\wei[-1]\nabla_\cKf\wei)(\gamma(t))
=3\abs{\gamma'(t)}^{-1}\sfrac{\rd}{\rd t}\abs{\gamma'(t)}=3\sfrac{\rd}{\rd t}\log\abs{\gamma'(t)},
\]
so
\begin{equation}
\label{eq:0intDivXloop}
\int_0^\tau(\Div\cKf)(\gamma(t))\,\rd t=0.
\end{equation}
Next, the (signed) curvature of $\gamma$ is given as
\[
\kappa(t)=\abs{\gamma'(t)}^{-3}\Nvec.(\gamma'(t)\times\gamma''(t))
=\sfrac12(\wei[-1]\icB.\cX)(\gamma(t))=\sfrac12(\wei[-1]\abs{\cX})(\gamma(t)).
\]
Since we traverse the simple closed loop $\gamma$ once for $t\in[0,\tau]$ 
the Hopf Umlaufsatz (see \cite[Theorem 1.7]{SII}, for example) gives
$\int_0^\tau \kappa(t)\,\abs{\gamma'(t)}\,\rd t=\pm2\pi$. It follows that
\begin{equation}
\label{eq:HUevalint}
\int_0^\tau\abs{\cX(\gamma(t))}\,\rd t=4\pi.
\end{equation}
Finally suppose $\Curl\magp$ is everywhere parallel to $\cKf$ for some smooth $\magp$. 
Let $\Omega$ denote the region of the plane $\pP$ bounded by $\gamma$. 
Using the Kelvin-Stokes theorem we get
\[
\int_0^\tau\cKf.\magp(\gamma(t))\,\rd t
=\int_0^\tau\gamma'(t).\magp(\gamma(t))\,\rd t
=\oint_{\gamma}\magp.\rd\gamma
=\iint_{\Omega}(\Curl\magp).\rd\Omega.
\]
Now $\rd\Omega$ is parallel to $\Nvec$ while $\Curl\magp$ is parallel to $\cKf$. 
However $\cKf.\Nvec=0$ on $\pP$ by Lemma \ref{lem:simcKfplanic}. Thus
\begin{equation}
\label{eq:cloloopflux0}
\int_0^\tau\cKf.\magp(\gamma(t))\,\rd t=0.
\end{equation}
(This shows the overall flux of the magnetic field $\magf=\Curl\magp$
through $\gamma$ is zero.)

\section{Characterisation and admissible fields}
\label{sec:CharAdmFields}

We begin by explicitly identifying all conformal Killing fields on $\R^3$. 
Clearly the set of conformal Killing fields is a linear space. 

\begin{prop}
\label{prop:gencKfchar}
On $\R^3$ the space of conformal Killing fields is $10$-dimensional.
More precisely, $\cKf$ is a conformal Killing field iff
\begin{equation}
\label{eq:gencKfR3}
\cKf=\bfa+b_0\bfx+\bfb\times\bfx+(\bfc.\bfx)\bfx-\sfrac12\abs{\bfx}^2\bfc
\end{equation}
for some $b_0\in\R$ and $\bfa,\bfb,\bfc\in\R^3$.
\end{prop}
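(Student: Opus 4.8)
The plan is to verify that every field of the form \eqref{eq:gencKfR3} is a conformal Killing field by direct substitution, and then to prove the converse by showing that the defining equation \eqref{eq:cKfdefpr} forces $\cKf$ into this form. First I would check sufficiency: the ten parameters $b_0\in\R$ and $\bfa,\bfb,\bfc\in\R^3$ give a $10$-dimensional family, and one simply computes $\nabla_i\cKf_j+\nabla_j\cKf_i$ for each of the four building blocks. The translation $\bfa$ contributes nothing; the dilation $b_0\bfx$ gives $2b_0\delta_{ij}$; the rotation $\bfb\times\bfx$ is antisymmetric in its derivatives and so contributes nothing to the symmetric part; the special conformal term $(\bfc.\bfx)\bfx-\frac12\abs{\bfx}^2\bfc$ is the only delicate one, and a short calculation should show its symmetrised gradient equals $(\bfc.\bfx)\delta_{ij}$, which is a scalar multiple of $\delta_{ij}$. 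Summing, the symmetric gradient is proportional to $\delta_{ij}$, confirming \eqref{eq:cKfdefpr}. One should also confirm linear independence of the ten generators to get exactly dimension $10$.

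For necessity, I would exploit the identities already established in Section \ref{sec:cKfgen}. The key observation is \eqref{eq:DeltacKf}, which gives $\Delta\cKf_j=-\frac13\nabla_j(\Div\cKf)$. Taking a further divergence and using $\Delta(\Div\cKf)=\nabla_j\Delta\cKf_j=-\frac13\Delta(\Div\cKf)$ forces $\Delta(\Div\cKf)=0$, so $\Div\cKf$ is harmonic. In fact I expect one can show $\Div\cKf$ is affine: differentiating \eqref{eq:cKfdefpr} and antisymmetrising, or contracting the second derivatives, should yield that the Hessian of $\Div\cKf$ vanishes, i.e. $\nabla_i\nabla_j(\Div\cKf)$ is constant and in fact zero beyond linear order. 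Writing $\Div\cKf=3b_0+3\bfc.\bfx$ for suitable constants $b_0$ and $\bfc$, equation \eqref{eq:DeltacKf} then pins down $\Delta\cKf=-\bfc$, a constant vector, so each component $\cKf_j+\frac12\abs{\bfx}^2 c_j$ is harmonic.

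The main obstacle is the final bootstrap: showing that a conformal Killing field whose second-order structure is fixed as above must be exactly the quadratic polynomial \eqref{eq:gencKfR3}, with no room for additional harmonic pieces. The plan is to set $\rof=\cKf-\bigl[(\bfc.\bfx)\bfx-\frac12\abs{\bfx}^2\bfc\bigr]$ and check, using the established value of $\Div\cKf$ and the linearity of the defining equation, that $\rof$ is itself a conformal Killing field with $\Div\rof=3b_0$ constant. Then \eqref{eq:DeltacKf} applied to $\rof$ gives $\Delta\rof=0$, so \eqref{eq:cKfdefpr} reads $\nabla_i\rof_j+\nabla_j\rof_i=2b_0\delta_{ij}$ with each $\rof_j$ harmonic; subtracting the dilation $b_0\bfx$ leaves a genuine Killing field $\rof-b_0\bfx$ whose symmetrised gradient vanishes and whose components are harmonic. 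A standard argument then shows any such Killing field with at most linear growth is an infinitesimal rigid motion $\bfa+\bfb\times\bfx$: the antisymmetry of $\nabla_i(\rof_j-b_0 x_j)$ together with harmonicity forces the first derivatives to be constant, hence the field is affine and its linear part antisymmetric. Assembling the pieces recovers \eqref{eq:gencKfR3}. I would be careful to justify why no nonpolynomial harmonic solutions intrude — this follows because the relations constrain all first derivatives to be polynomials of bounded degree, forcing $\cKf$ to be a polynomial vector field of degree at most two.
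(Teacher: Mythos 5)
Your route is genuinely different from the paper's. The paper stays with the component relations coming from \eqref{eq:cKfdefpr}, shows by elementary index manipulation that \emph{every} third derivative of $\cKf$ vanishes, and then reads \eqref{eq:gencKfR3} off the Taylor polynomial at $0$; you instead run a divergence bootstrap and then peel off the special conformal and dilation parts to reduce to rigidity of Killing fields. Most of your skeleton is sound: harmonicity of $\Div\cKf$ is proved correctly; once $\Div\cKf=3b_0+3\vtsp\bfc.\bfx$ is known, linearity of \eqref{eq:cKfdefpr} together with the (checked) sufficiency of the special conformal term makes $\rof=\cKf-\bigl[(\bfc.\bfx)\bfx-\sfrac12\abs{\bfx}^2\bfc\bigr]$ a conformal Killing field with constant divergence, so $W=\rof-b_0\bfx$ is a genuine Killing field; and the classical trick (antisymmetry of $\nabla_iW_j$ plus commuting partials forces $\nabla_k\nabla_iW_j\equiv0$) gives $W=\bfa+\bfb\times\bfx$ outright — your hedging about ``linear growth'' and possible non-polynomial harmonic intruders is unnecessary, since no growth or harmonicity hypotheses are needed there. (Incidentally, $\cKf_j+\sfrac12\abs{\bfx}^2c_j$ is not harmonic — its Laplacian is $2c_j$; the harmonic object is $\rof_j$, which is what your argument actually uses.)

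The genuine gap is the step you flag with ``I expect one can show'': the vanishing of the Hessian of $\Div\cKf$. This is the crux of the necessity direction. Harmonicity of $\Div\cKf$, which is all you actually establish, is far too weak — a harmonic function need not be affine — and without the Hessian statement the constants $b_0$ and $\bfc$ are never defined, so the entire peeling argument never gets off the ground. The gap is fillable, most economically with identities the paper has already established: by \eqref{eq:CurlXKf} the field $\cX=\Curl\cKf$ is a Killing field, hence affine by exactly the rigidity trick you invoke at the end, so $\Curl\cX$ is constant; on the other hand \eqref{eq:DeltacKf} and the identity $\Curl\Curl\cKf=\nabla(\Div\cKf)-\Delta\cKf$ give
\[
\nabla(\Div\cKf)=\Curl\cX+\Delta\cKf=\Curl\cX-\sfrac13\nabla(\Div\cKf),
\qquad\text{i.e.}\qquad
\nabla(\Div\cKf)=\sfrac34\Curl\cX,
\]
so $\nabla(\Div\cKf)$ is constant and $\Div\cKf$ is affine, as required. (Alternatively, differentiate \eqref{eq:cKfdefpr}, cycle the indices to obtain $2\nabla_i\nabla_j\cKf_k=\nabla_i\lambda\,\delta_{jk}+\nabla_j\lambda\,\delta_{ik}-\nabla_k\lambda\,\delta_{ij}$ with $\lambda=\sfrac23\Div\cKf$, differentiate once more and contract; this yields $\nabla_l\nabla_j\lambda=-\Delta\lambda\,\delta_{lj}$ and then $\Delta\lambda=0$, valid in any dimension $n\ge3$.) With either insertion your proof closes.
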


\begin{proof}
A straightforward calculation shows that anything of the form \eqref{eq:gencKfR3} is a conformal Killing field.

Allowing all choices $\{\nsi,\nsj,\nsk\}=\{1,2,3\}$
\eqref{eq:cKfdefpr} is equivalent to
\begin{subequations}
\label{eq:ord1X}
\begin{align}
\label{eq:ord1aa}
\nabla_\nsi\cKf_\nsi&=\nabla_\nsj\cKf_\nsj,\\
\label{eq:ord1ab}
\nabla_\nsi\cKf_\nsj&=-\nabla_\nsj\cKf_\nsi.
\end{align}
\end{subequations}
For second order derivatives we can combine \eqref{eq:ord1aa} and \eqref{eq:ord1ab} to get
\begin{subequations}
\label{eq:ord2X}
\begin{equation}
\label{eq:ord2aaa}
\nabla_\nsi^2\cKf_\nsi=\nabla_\nsi\nabla_\nsj\cKf_\nsj=-\nabla_\nsj^2\cKf_\nsi.
\end{equation}
Repeated use of \eqref{eq:ord1ab} gives 
$\nabla_\nsi\nabla_\nsj\cKf_\nsk=-\nabla_\nsi\nabla_\nsk\cKf_\nsj=\nabla_\nsj\nabla_\nsk\cKf_\nsi=-\nabla_\nsi\nabla_\nsj\cKf_\nsk$
so
\begin{equation}
\label{eq:ord2abc}
\nabla_\nsi\nabla_\nsj\cKf_\nsk=0.
\end{equation}
\end{subequations}
Combining \eqref{eq:ord2aaa} and \eqref{eq:ord1aa} we get 
$\nabla_\nsi^3\cKf_\nsi=-\nabla_\nsi\nabla_\nsj^2\cKf_\nsi=-\nabla_\nsj^3\cKf_\nsj$.
Since this holds for all choices of $\nsi$ and $\nsj$ we must have
\[
\nabla_\nsi^3\cKf_\nsi=\nabla_\nsi^2\nabla_\nsj\cKf_\nsj=0.
\]
Using \eqref{eq:ord1aa} and \eqref{eq:ord2abc} we also get
\[
\nabla_\nsi^2\nabla_\nsj\cKf_\nsi=\nabla_\nsi\nabla_\nsj\nabla_\nsk\cKf_\nsk=0.
\]
It follows that all third order derivatives of $\cKf$ are identically $0$. 
Hence $\cKf$ is completely determined by the values of its first two derivatives at $0$.
Set 
\[
a_\nsi=\cKf_\nsi(0),\quad
b_0=\nabla_\nsi\cKf_\nsi(0),\quad
b_\nsi=\epsilon_{\nsi\nsj\nsk}\nabla_\nsj\cKf_\nsk(0)
\quad\text{and}\quad
c_\nsi=\nabla_\nsi^2\cKf_\nsi(0).
\]
Note that, \eqref{eq:ord1X} shows the definitions of $b_0$ and $\bfb$ are consistent, 
while the remaining second order derivatives can be expressed in terms of $\bfc$ using \eqref{eq:ord2X}.
It is straightforward to check that $\cKf$ is now given by \eqref{eq:gencKfR3}.
\end{proof}

It is now straightforward to identify which conformal Killing fields satisfy \eqref{eq:simrotcond}.

\begin{prop}
\label{prop:charcKfXcX0}
The conformal Killing fields satisfying \eqref{eq:simrotcond} are those given as
\[
\cKf=\bfa+b_0\bfx,\quad
\cKf=\bfb\times(\bfx-\bfx_0)
\quad\text{or}\quad
\cKf=\nu\bfc+\bfc.(\bfx-\bfx_0)(\bfx-\bfx_0)-\sfrac12\abs{\bfx-\bfx_0}^2\bfc
\]
for some $\bfa,\bfb,\bfc,\bfx_0\in\R^3$ and $b_0,\nu\in\R$.
\end{prop}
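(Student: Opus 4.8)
The plan is to start from the general characterisation \eqref{eq:gencKfR3} and impose the condition \eqref{eq:simrotcond}, namely $\cKf.\cX=0$ where $\cX=\Curl\cKf$. Since Proposition \ref{prop:gencKfchar} already tells us every conformal Killing field is
\[
\cKf=\bfa+b_0\bfx+\bfb\times\bfx+(\bfc.\bfx)\bfx-\sfrac12\abs{\bfx}^2\bfc,
\]
parametrised by $(\bfa,b_0,\bfb,\bfc)$, the task reduces to computing $\cX$ for this expression, forming the scalar $\cKf.\cX$, and finding which parameter values make it vanish identically on $\R^3$. First I would compute $\cX=\Curl\cKf$ term by term: the $\bfa$ and $\sfrac12\abs{\bfx}^2\bfc$ contributions, the $b_0\bfx$ contribution (which has zero curl), the $\bfb\times\bfx$ term (curling to $2\bfb$), and the $(\bfc.\bfx)\bfx$ term. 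I expect $\cX$ to come out affine in $\bfx$, of the form $\cX=2\bfb+2\bfc\times\bfx$ or similar after the routine cancellations.

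\emph{Next} I would substitute into $\cKf.\cX$ and collect terms by their polynomial degree in $\bfx$. Since $\cKf$ is quadratic and $\cX$ is linear in $\bfx$, the product $\cKf.\cX$ is a cubic polynomial in $\bfx$; the condition \eqref{eq:simrotcond} requires every homogeneous component to vanish. The cubic and quadratic parts will force algebraic relations among $\bfb$ and $\bfc$ (I anticipate something like $\bfc\times\bfx$ being forced orthogonal to the quadratic part, yielding $\bfb\parallel\bfc$ or one of them zero), while the linear and constant parts bring in $\bfa$ and $b_0$. Sorting through which combinations of relations can hold simultaneously should partition the solution set into the three stated families.

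\emph{The main obstacle} I expect is organising the case analysis cleanly rather than any single hard computation. Once the vanishing conditions are written out, several sub-cases arise depending on whether $\bfc=0$, whether $\bfb$ and $\bfc$ are parallel, and whether $\bfb=0$: when $\bfc=0$ one should recover $\cKf=\bfa+b_0\bfx+\bfb\times\bfx$, which splits into the translation/dilation family $\bfa+b_0\bfx$ and the rotation family $\bfb\times(\bfx-\bfx_0)$ after completing the square to absorb $\bfa$ into a shift $\bfx_0$; when $\bfc\ne0$ the special conformal family $\cKf=\nu\bfc+\bfc.(\bfx-\bfx_0)(\bfx-\bfx_0)-\sfrac12\abs{\bfx-\bfx_0}^2\bfc$ should emerge, again after recentring at a suitable $\bfx_0$. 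The delicate bookkeeping is showing that the constraints permit exactly these recentred normal forms and nothing more, i.e.\ that the shifts $\bfx_0$ genuinely absorb the lower-order parameters without introducing spurious solutions.

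\emph{Finally} I would verify the converse direction by checking directly that each of the three listed families satisfies $\cKf.\cX=0$; for the translation/dilation field $\cX=0$ outright, for the rotation field $\cKf$ is tangent to circles about a fixed axis with $\cX$ a constant multiple of that axis (so $\cKf\perp\cX$), and for the special conformal field a short calculation confirms orthogonality. This both confirms the characterisation and provides a useful sanity check on the forward computation.
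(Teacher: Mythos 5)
Your proposal follows essentially the same route as the paper's proof: compute $\cX=2\bfb+2\bfc\times\bfx$ from \eqref{eq:gencKfR3}, expand $\cKf.\cX$ and require each homogeneous part to vanish (the cubic part cancels identically, and the resulting conditions are $\bfa.\bfb=0=\bfb.\bfc$ together with $b_0\bfb=\bfc\times\bfa$ --- so orthogonality of $\bfb$ and $\bfc$, not the parallelism you anticipated), then run the case split $\bfb=\bfc=0$, $\bfc=0\neq\bfb$, $\bfc\neq0$ with the recentring shifts $\bfx_0$ absorbing the lower-order parameters. This is exactly the paper's argument, so only the routine execution details remain.
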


\begin{proof}
Suppose $\cKf$ is given by \eqref{eq:gencKfR3}. Then $\cX=2\bfb+2\bfc\times\bfx$ so
\begin{align*}
\cKf.\cX
&=2\bfa.\bfb+2\bfa.(\bfc\times\bfx)+2b_0\bfb.\bfx+2(\bfb\times\bfx).(\bfc\times\bfx)+2(\bfc.\bfx)(\bfb.\bfx)-\abs{\bfx}^2\bfb.\bfc\\
&=2\bfa.\bfb+2(\bfa\times\bfc+b_0\bfb).\bfx+(\bfb.\bfc)\abs{\bfx}^2
\end{align*}
Thus \eqref{eq:simrotcond} is equivalent to 
\[
\bfa.\bfb=0=\bfc.\bfb
\quad\text{and}\quad
b_0\bfb=\bfc\times\bfa.
\]
The first case arises if $\bfb=\bfc=0$. If $\bfc=0$ but $\bfb\neq0$ then we must have
$b_0=0$ and $\bfa.\bfb=0$ so $\bfa=-\bfb\times\bfx_0$ where $\bfx_0=\abs{\bfb}^{-2}\bfb\times\bfa$; 
this corresponds to the second case.

Now suppose $\bfc\neq0$. Set $\bfx_0=\abs{\bfc}^{-2}(\bfc\times\bfb-b_0\bfc)$
so $b_0=-\bfc.\bfx_0$ and $\bfb=-\bfc\times\bfx_0$ since $\bfb.\bfc=0$. 
Hence $\abs{\bfc}^2\abs{\bfx_0}^2=b_0^2+\abs{\bfb}^2$ while
$\bfc\times(\bfa+b_0\bfx_0)=0$, leading to
\[
0=\bfc\times\bigl(\bfc\times(\bfa+b_0\bfx_0)\bigr)
\quad\Longrightarrow\quad
\bfa=\abs{\bfc}^{-2}(\bfc.\bfa)\bfc-\abs{\bfc}^{-2}b_0^2\bfc-b_0\bfx_0.
\]
Then
\begin{align*}
&\bfc.(\bfx-\bfx_0)(\bfx-\bfx_0)-\sfrac12\abs{\bfx-\bfx_0}^2\bfc\\
&\qquad{}=(\bfc.\bfx_0)\bfx_0-\sfrac12\abs{\bfx_0}^2\bfc
-\bigl((\bfc.\bfx)\bfx_0+(\bfc.\bfx_0)\bfx-(\bfx_0.\bfx)\bfc\bigr)
+(\bfc.\bfx)\bfx-\sfrac12\abs{\bfx}^2\bfc\\
&\qquad{}=-b_0\bfx_0-\sfrac12\abs{\bfc}^{-2}(\abs{\bfb}^2+b_0^2)\bfc
+b_0\bfx+\bfb\times\bfx+(\bfc.\bfx)\bfx-\sfrac12\abs{\bfx}^2\bfc
\end{align*}
The third case formula for $\cKf$ now follows if we take
$\nu=\sfrac12\abs{\bfc}^{-2}\bigl(2\bfa.\bfc+\abs{\bfb}^2-b_0^2\bigr)$.
\end{proof}

Several of the fields given in Proposition \ref{prop:charcKfXcX0} have isolated fixed points. 
In these cases (almost all) integral curves converge to these fixed points as $t\to+\infty$ 
or $t\to-\infty$; this can be seen by considering the possible cases.

If $b_0\neq0$ in the first type of field given in Proposition \ref{prop:charcKfXcX0} we can write $\cKf=b_0(\bfx-\bfx_0)$
with $\bfx_0=-b_0^{-1}\bfa$. This (monopole) field has a single isolated fixed point at $\bfx_0$,
with $\gamma(t)\to0$ as either $t\to-\infty$ or $t\to+\infty$ for all integral curves $\gamma$
(the sign depends on the sign of $b_0$).

If $\nu<0$ in the third type of field then $\cKf$ has isolated fixed points at 
$\bfx_0\pm\abs{\bfc}^{-1}\abs{2\nu}^{1/2}\bfc$.
All integral curves converge to one fixed point as $t\to-\infty$ and the other as $t\to+\infty$, 
with the exception of those curves which are half lines along the axis parallel to $\bfc$ and passing through $\bfx_0$ 
(these approach a fixed point in one direction and have finite time blow up in the other). 

When $\nu=0$ these isolated fixed points merge into a single fixed point at $\bfx_0$ (dipole field). 
All integral curves converge to $\bfx_0$ in both directions, with the exception of those along 
the axis parallel to $\bfc$ and passing through $\bfx_0$.

\medskip

In the next result we show that if almost all integral curves of $\cKf$ converge to a fixed point 
then there are no smooth non-trivial magnetic fields which are everywhere parallel to $\cKf$.
We can view this as a consequence of the fact that magnetic fields are divergence free so 
cannot contain any magnetic monopoles.

\begin{lem}
\label{lem:nofixedptB}
Let $\cKf$ be a non-trivial conformal Killing field and suppose that for a dense set of points $\bfx$ in $\R^3$, 
the integral curve of $\cKf$ passing through $\bfx$ converges (to a point in $\R^3$) as either $t\to+\infty$ or $t\to-\infty$.
If $\magf$ is a smooth magnetic field which is everywhere parallel to $\cKf$ then we must have $\magf\equiv0$. 
\end{lem}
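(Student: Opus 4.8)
The plan is to turn the divergence-free condition $\Div\magf=0$ into a quantity conserved along the integral curves of $\cKf$, using the conformal Killing identity \eqref{eq:XDabsXa}, and then extract a contradiction from the fact that those curves run into zeros of $\cKf$. First I would work on the open set $\nzX$ where $\cKf\neq0$. Since $\magf$ is everywhere parallel to $\cKf$, on $\nzX$ I may write $\magf=g\,\cKf$ with $g=(\magf.\cKf)\wei[-2]$ smooth. The set $\nzX$ is open and dense (by Proposition \ref{prop:gencKfchar}, $\cKf$ is a polynomial field of degree $\le2$, so its zero set is a proper algebraic subset and hence nowhere dense), so its intersection with the given dense set of convergent points is still dense. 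It therefore suffices to prove $\magf(\bfx_0)=0$ for every $\bfx_0\in\nzX$ whose integral curve converges.

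Fix such an $\bfx_0$ and let $\gamma$ be its integral curve, so $\gamma(0)=\bfx_0$ and $\gamma'(t)=\cKf(\gamma(t))$. Uniqueness for the flow keeps $\gamma$ inside $\nzX$ at all finite times, since a finite-time zero of $\cKf$ would force $\gamma$ to be constant and hence $\cKf(\bfx_0)=0$. From $\Div\magf=\Div(g\,\cKf)=0$ I obtain $\nabla_\cKf g=-g\,\Div\cKf$ on $\nzX$, while \eqref{eq:XDabsXa} with $\alpha=3$ gives $\nabla_\cKf\wei[3]=(\Div\cKf)\wei[3]$. The product rule then yields $\nabla_\cKf\bigl(g\wei[3]\bigr)=0$, so $g\wei[3]$ is constant along $\gamma$; call this constant $C$. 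Consequently $\abs{\magf(\gamma(t))}=\abs{g}\wei=\abs{C}\wei[-2]$ all along $\gamma$.

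Finally I would analyse the limit. Say $\gamma(t)\to p$ as $t\to+\infty$ (the case $t\to-\infty$ is identical). Because $\gamma'(t)=\cKf(\gamma(t))\to\cKf(p)$ by continuity while $\gamma$ itself converges, the velocity limit must vanish, so $\cKf(p)=0$ and hence $\wei(\gamma(t))\to0$. If $C\neq0$, then $\abs{\magf(\gamma(t))}=\abs{C}\wei[-2]\to\infty$, contradicting the continuity of $\magf$ (which forces $\magf(\gamma(t))\to\magf(p)$, a finite vector). Thus $C=0$, and since $\wei(\bfx_0)\neq0$ this gives $g(\bfx_0)=0$, i.e. $\magf(\bfx_0)=0$. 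As $\magf$ vanishes on a dense set and is continuous, $\magf\equiv0$.

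I expect the crux to be spotting the conserved quantity $g\wei[3]$; once \eqref{eq:XDabsXa} is in hand this is immediate, and it is the quantitative incarnation of the heuristic that a divergence-free field parallel to $\cKf$ would have to behave like a monopole near a fixed point. The only real care needed elsewhere is in the two bookkeeping points that make the limit argument rigorous: verifying that $\gamma$ cannot leave $\nzX$ in finite time, so that $g\wei[3]$ is genuinely defined on all of $[0,\infty)$, and deducing $\cKf(p)=0$ from the convergence of $\gamma$ together with $\gamma'=\cKf(\gamma)$. Both are routine, so I anticipate no serious obstacle.
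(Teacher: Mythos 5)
Your proof is correct and follows essentially the same route as the paper's: the conserved quantity $g\wei[3]$ along integral curves (obtained from $\Div\magf=0$ and \eqref{eq:XDabsXa}) is exactly the paper's $f\wei[3]$, and the contradiction between $\abs{\magf}=\abs{C}\wei[-2]\to\infty$ and the smoothness of $\magf$ at the limit point is the paper's argument as well. Your treatment is marginally more explicit on two points the paper leaves implicit (that the limit of a converging integral curve must be a zero of $\cKf$, and that $\nzX$ is dense), but the substance is identical.
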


\begin{proof}
We can write $\magf=f\cKf$ for some $f\in\sfX$.
Using \eqref{eq:XDabsXa} we get
\[
\nabla_\cKf(f\wei[3])
=\wei[3]\bigl(\nabla_\cKf f+f\Div\cKf\bigr)
=\wei[3]\Div(f\cKf)
=\wei[3]\Div\magf
=0.
\]
Hence $f\wei[3]$ is constant on each integral curve of $\cKf$.

Suppose $\bfx_0$ is a fixed point of $\cKf$ and $\gamma$ is a 
non-stationary integral curve of $\cKf$ with $\gamma(t)\to\bfx_0$ as $t\to+\infty$.
For $\bfx$ near $\bfx_0$ we have $\wei(\bfx)\le C\abs{\bfx-\bfx_0}$ for some constant $C$.
Then $\wei(\gamma(t))\le C\abs{\gamma(t)-\bfx_0}$ for all sufficiently large $t$, so
\[
\wei[-2](\gamma(t))\ge C^{-2}\abs{\gamma(t)-\bfx_0}^{-2}\to+\infty
\]
as $t\to+\infty$ (not that, the left hand side is finite for all $t$ as $\gamma$ is non stationary).
However $(f\wei[3])(\gamma(t))=\beta_\gamma$ for some constant $\beta_\gamma$, so
\[
\abs{\magf(\gamma(t))}=\abs{f(\gamma(t))}\,\wei(\gamma(t))
=\abs{\beta_\gamma}\,\wei[-2](\gamma(t)),
\]
while $\magf$ is smooth, so $\magf(\gamma(t))\to\magf(\bfx_0)$ as $t\to+\infty$.
To avoid a contradiction we must take $\beta_\gamma=0$.
Hence $f(\gamma(t))=0$ for all $t$; that is, $f$ is identically $0$ on the integral curve $\gamma$.
A similar argument applies if $\gamma(t)\to\bfx_0$ as $t\to-\infty$. 

As the integral curves which converge as either $t\to+\infty$ or $t\to-\infty$ are dense in $\R^3$
we now get $f=0$ and hence $\magf=0$ on a dense subset of $\R^3$. Since $\magf$ is smooth we then get $\magf\equiv0$.
\end{proof}

As we are interested in magnetic fields which are parallel to a conformal Killing field $\cKf$ 
satisfying \eqref{eq:simrotcond}, Lemma \ref{lem:nofixedptB} shows that we may further assume $\cKf$ has no
isolated fixed points; we shall call such fields \emph{admissible}.

We consider three particular admissible fields defined as
\[
\udf=\euv{3}=(0,0,1),\qquad
\rof=\euv{3}\times\bfx=(-x_2,x_1,0)
\]
and
\[
\crf=\sfrac12\mu^2\euv{3}+(\euv{3}.\bfx)\bfx-\sfrac12\abs{\bfx}^2\euv{3}
=\sfrac12\bigl(2x_1x_3,2x_2x_3,\mu^2-x_1^2-x_2^2+x_3^2\bigr)
\]
for $\mu>0$. With an appropriate choice of origin and orientation in $\R^3$ 
any admissible field can be reduced to a scaled copy of one of these fields; 
it follows that, when considering admissible fields, it is sufficient to 
work with just these examples.

\begin{rem}
\label{rem:gpBfA'}
Suppose we can find one (smooth) magnetic potential $\bmagp$ 
with $\Curl\bmagp$ parallel to $\cKf$ and $\cKf.\bmagp=0$.
Now let $f\in C^\infty$ be any function which is constant on the integral curves of $\cKf$, 
that is $\nabla_\cKf f=0$. 
Then \eqref{eq:vectripleprod} gives
$\cKf\times(\nabla f\times\bmagp)=(\cKf.\bmagp)\nabla f-\nabla_\cKf f\bmagp=0$,
so $\nabla f\times\bmagp$ is parallel to $\cKf$. 
However, setting $\magp=f\bmagp$ we have
\[
\magf=\Curl\magp=f\Curl\bmagp+\nabla f\times\bmagp.
\]
Thus the magnetic potential $\magp=f\bmagp$ also generates a field which is parallel to $\cKf$.
\end{rem}

\smallskip
\begin{ex}
\label{ex:ud}
We have $\Div\udf=0$, $\Curl\udf=0$ and $\wei=1$ so $\nzX[\udf]=\R^3$.
The integral curves of $\udf$ are of the form $\gamma(t)=\bfx_0+t\euv{3}$ for some $\bfx_0\in\R^3$;
all are parallel to the $x_3$-axis.

Magnetic fields parallel to $\udf$ are uni-directional. 
We can clearly generate such fields by taking $\magp=f(x_1,x_2)\rof$
for some $f\in C^\infty(\R^2)$ (this corresponds to Remark \ref{rem:gpBfA'} with $\bmagp=\rof$).
\end{ex}

\smallskip
\begin{ex}
\label{ex:ro}
We have $\Div\rof=0$, $\Curl\rof=2\euv{3}$ and
$\wei[2]=x_1^2+x_2^2$, so $\nzX[\rof]=\R^3\setminus\R\euv{3}$ 
(that is, $\R^3$ with the $x_3$-axis removed). 
Note that, $\wei^{-1}\in L^1_\loc$.

It is straightforward to check that the 
integral curves are all of the form 
\[
\gamma(t)=\rho\bigl(\cos(t-t_0),\sin(t-t_0),0\bigr)+(0,0,\beta)
\]
for some $\rho\ge0$ and $t_0,\beta\in\R$. 
The integral curves are circles centred on and perpendicular to the $x_3$-axis. 
All integral curves are periodic with period $2\pi$. 
Each point on the $x_3$-axis is a stationary integral curve.

Magnetic fields parallel to $\rof$ circulate about the $x_3$-axis.
For any $\mu$, $\Curl\crf=2\rof$ and $\rof.\crf=0$; thus we can
generate such fields using Remark \ref{rem:gpBfA'} with $\bmagp=\crf$. 
Alternatively we can take $\bmagp=\udf$, as in the discussion preceding Corollary \ref{cor:cinfty0potnZMs}. 
\end{ex}

\smallskip
\begin{ex}
\label{ex:cr}
We have $\Div\crf=3x_3$, $\Curl\crf=2\euv{3}\times\bfx$ and 
\[
\wei[2]
=\sfrac14(\mu^2-\abs{\bfx}^2)^2+\mu^2x_3^2
=\sfrac14(\mu^2+\abs{\bfx}^2)^2-\mu^2(x_1^2+x_2^2),
\]
so $\nzX[\crf]=\R^3\setminus\S^1_\mu$ 
where $\S^1_\mu$ is the circle in the $x_1,x_2$-plane with centre $0$ and radius $\mu$.
Note that, $\wei^{-1}\in L^1_\loc$.

The $x_3$-axis, parametrised as $\gamma(t)=(0,0,\mu\tan(\mu(t-t_0)/2))$ for $t_0\in\R$, 
is one integral curve; note that
\[
\gamma'(t)=\sfrac12\bigl(0,0,\mu^2(1+\tan^2(\mu(t-t_0)/2))\bigr)=\crf(\gamma(t)).
\]
The remaining integral curves are circles in planes passing through the $x_3$-axis. 
Let 
\[
z(t)=r(t)+ix_3(t)=\mu\frac{1+\rho e^{-i\mu t}}{1-\rho e^{-i\mu t}}
=\mu\frac{1-\rho^2-2i\rho\sin(\mu t)}{1+\rho^2-2\rho \cos(\mu t)}
\]
for some $\rho\in[0,1)$. Then
\[
z'(t)=-2i\mu^2\frac{\rho e^{-i\mu t}}{(1-\rho e^{-i\mu t})^2}
=-\sfrac12 i(z^2(t)-\mu^2)
\]
so $r'(t)=r(t)x_3(t)$ and $x_3'(t)=\frac12(\mu^2+x_3^2(t)-r^2(t))$.
If $\theta,t_0\in\R$ it follows that
\[
\gamma(t)=r(t-t_0)(\cos\theta,\sin\theta,0)+(0,0,x_3(t-t_0))
\]
is an integral curve of $\crf$, lying in the plane which includes the $x_3$-axis 
and makes an angle of $\theta$ with the $x_1$-axis. All integral curves are periodic with period $2\pi/\mu$.

Magnetic fields parallel to $\crf$ form loops linking with the circle $\S^1_\mu$.
Let $\bmagp=(\mu^2+\abs{\bfx}^2)^{-2}\rof$. Then $\crf.\bmagp=0$ while, with the help of \eqref{eq:vectripleprod},
\[
\Curl\bmagp=2(\mu^2+\abs{\bfx}^2)^{-2}\euv{3}-4(\mu^2+\abs{\bfx}^2)^{-3}\,\bfx\times(\euv{3}\times\bfx)
=4(\mu^2+\abs{\bfx}^2)^{-3}\crf;
\]
this is clearly parallel to $\crf$. 
We can now use Remark \ref{rem:gpBfA'} to generate further magnetic fields which are parallel to $\crf$.
\end{ex}

\begin{rem}
\label{rem:isosimrot}
If we use (inverse) stereographic projection to move the fields $\rof$ and $\crf[1]$ to $\S^3$ 
we get the generators of simple rotations in orthogonal $2$-dimensional planes. 
More generally, non-constant admissible fields $\cKf$ can be linked to simple rotations of $\S^3$; 
this link can be used to give an alternative justification of the fact that the integral 
curves of $\cKf$ are planar (see the discussion after \eqref{eq:altCurlXcp}).

By contrast the conformal Killing field $\rof+\crf[1]$ corresponds to an isoclinic rotation of $\S^3$
(the integral curves are the fibres of the Hopf fibration of $\S^3$); the  zero mode examples 
considered in \cite{ES} have magnetic fields which are parallel to $\rof+\crf[1]$.
\end{rem}

\section{Symmetries}
\label{sec:symm}

Let $\cKf$ be a conformal Killing field and 
suppose $\magp$ is a magnetic potential such that $\magf=\Curl\magp$ is everywhere parallel to $\cKf$.
Introduce operators
\[
\OpQ=\cKf.(-\ri\nabla-\magp)+\sfrac14\sigma.\cX-\sfrac23\ri\Div\cKf
\quad\text{and}\quad
\OpS=\wei[-1]\sigma.\cKf;
\]
both act on spinors. 

\begin{rem}
The operator $\OpQ$ is the component of the spin connection in the direction of the magnetic field,
while $\OpS$ is the spin operator in this direction (a spinor $\psi$ has spin direction parallel to $\cKf$ 
iff it satisfies $\OpS\psi=\pm\psi$). 
\end{rem}

Recalling \eqref{eq:nzscKf}, set $\sfX=C^\infty(\nzX)$ and $\cssfX=C^\infty_0(\nzX)$; in particular, 
no restriction is placed on the limiting behaviour of functions in $\sfX$ as we approach $\{\bfx:\wei(\bfx)=0\}$,
while functions in $\cssfX$ can be extended by $0$ to give an inclusion $\cssfX\hookrightarrow C^\infty_0(\R^3)$.
We use the same notation for the spinor versions of these spaces 
(that is, $\sfX\otimes\C^2$ and $\cssfX\otimes\C^2$).

The operators $\Dirac$, $\OpQ$ and $\OpS$ satisfy some basic (anti-)commutator relations. 
It is enough to work with spinors from $\sfX$.

\begin{prop}
\label{prop:baspropDQS}
The following (anti-)commutator relations hold on $\sfX$:
\begin{enumerate}
\item[\textup{(i)}]
$[\Dirac\wei,\OpQ]=0$.
\item[\textup{(ii)}]
$[\OpQ,\OpS]=0$.
\item[\textup{(iii)}]
$\{\Dirac\wei,\OpS\}=2\OpQ+\frac12\wei[-1]\,\cKf.\cX\,\OpS$.
\end{enumerate}
\end{prop}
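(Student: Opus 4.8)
The plan is to prove all three relations by direct computation, working on the space $\sfX$ of smooth spinors away from the zero set of $\wei$, where all the conformal Killing field identities collected in Section \ref{sec:cKfgen} are available. The central tool will be the Pauli matrix identity \eqref{eq:PauliMatId}, which converts products of the form $(\bfa.\sigma)(\bfb.\sigma)$ into $\bfa.\bfb\,I_2+\ri(\bfa\times\bfb).\sigma$, together with the Leibniz rule for the covariant-type derivatives hidden inside $\Dirac$ and $\OpQ$. I would begin by recording the useful shorthand $\odD{}=-\ri\nabla-\magp$ for the magnetic momentum, so that $\Dirac=\sigma.\odD{}$ and $\OpQ=\cKf.\odD{}+\frac14\sigma.\cX-\frac23\ri\Div\cKf$; this makes the commutator bookkeeping cleaner since $[\odD{}_i,\odD{}_j]=\ri(\Curl\magp)_k\epsilon_{ijk}$ picks out the magnetic field, which is parallel to $\cKf$ by hypothesis.

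For part (ii), $[\OpQ,\OpS]=0$, I would expand $\OpS=\wei[-1]\sigma.\cKf$ and compute the commutator of $\OpQ$ with each of the three summands. The term $\cKf.\odD{}$ acting on $\wei[-1]\sigma.\cKf$ produces, via the product rule, derivatives of $\wei[-1]$ and of $\cKf$ in the $\cKf$ direction; here \eqref{eq:XDabsXa} controls $\nabla_\cKf\wei$ and \eqref{eq:genexprXDX} controls $\nabla_\cKf\cKf$, while the $\sigma.\cX$ and $\Div\cKf$ terms interact through \eqref{eq:PauliMatId}. I expect the various contributions to cancel precisely because $\cX=\Curl\cKf$ is itself a Killing field (shown at \eqref{eq:CurlXKf}) and because of the specific coefficients $\frac14$ and $\frac23$ built into $\OpQ$; verifying this cancellation is mostly a matter of patient index manipulation.

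Parts (i) and (iii) are the substantive ones, and I would treat them together since the anti-commutator $\{\Dirac\wei,\OpS\}$ and the commutator $[\Dirac\wei,\OpQ]$ share most of their algebra. Writing $\Dirac\wei=\wei\,\sigma.\odD{}$ (plus a first-order term from differentiating $\wei$), the key computation is to expand products like $(\sigma.\odD{})(\cKf.\odD{})$ using \eqref{eq:PauliMatId}, which yields a scalar piece $\odD{}.\cKf\odD{}$ symmetrised, a cross-product piece feeding into $\sigma.(\odD{}\times\cKf\odD{})$, and commutator terms $[\odD{}_i,\odD{}_j]$ that reproduce the magnetic field $\magf\parallel\cKf$. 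The identities \eqref{eq:altCurlX}, \eqref{eq:genexprXDX} and \eqref{eq:genexprDX2} relating $\nabla_\cKf\cKf$, $\nabla\wei[2]$ and $\XxcX$ are exactly what is needed to reorganise the resulting vector expressions, and I anticipate that the $\frac14\sigma.\cX$ and $\frac23\ri\Div\cKf$ corrections in $\OpQ$ are designed precisely to absorb the leftover connection terms so that (i) gives clean cancellation and (iii) produces the stated right-hand side $2\OpQ+\frac12\wei[-1]\,\cKf.\cX\,\OpS$.

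The main obstacle will be the careful tracking of first-order terms generated when the derivatives in $\Dirac$ and $\OpQ$ hit the non-constant coefficients $\wei$, $\cKf$, and $\cX$: these produce numerous scalar and $\sigma$-valued contributions whose cancellation relies on using the conformal Killing identities in exactly the right combinations, and a single mismatched coefficient of $\Div\cKf$ or of $\cKf.\cX$ would break the result. I would organise the calculation by separating each commutator into its scalar ($I_2$) part and its $\sigma.(\cdots)$ part, matching them independently against the claimed identities; the appearance of $\cKf.\cX$ in (iii) (rather than its vanishing) signals that this computation does not yet assume \eqref{eq:simrotcond}, so I must keep that term throughout rather than setting it to zero.
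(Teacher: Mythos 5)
Your plan is essentially the paper's own proof: a direct computation on $\sfX$ using the operator form of \eqref{eq:PauliMatId}, the fact that $\magf$ parallel to $\cKf$ removes the field-strength term from the commutator of magnetic momenta, and the Section \ref{sec:cKfgen} identities (\eqref{eq:XDabsXa}, \eqref{eq:XxCurlX}, \eqref{eq:altCurlX}, \eqref{eq:cKfdefpr}) to reorganise the first-order terms, correctly keeping $\cKf.\cX$ throughout since \eqref{eq:simrotcond} is not assumed. Two details to watch when carrying it out: the cancellation in (ii) rests on \eqref{eq:XxCurlX} and \eqref{eq:XDabsXa} rather than on $\cX$ being a Killing field, and in (i) the corrections built into $\OpQ$ do not by themselves make the commutator with $\Dirac$ vanish --- one finds $[\Dirac,\OpQ]=-\frac{\ri}{3}\Dirac\bigl((\Div\cKf)\,\cdot\,\bigr)$, and it is the weight $\wei$, via $[\OpQ,\wei]=-\frac{\ri}{3}(\Div\cKf)\wei$ from \eqref{eq:XDabsXa}, that produces the exact cancellation $[\Dirac\wei,\OpQ]=0$.
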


Note that, by $\Dirac\wei$ we mean the operator composition given by $\phi\mapsto\Dirac(\wei\phi)$.

\begin{proof}
We have $[-\ri\nabla_i-\magp_i,-\ri\nabla_j-\magp_j]=-\ri\epsilon_{ijk}\magf_k$ so
\begin{align}
[\Dirac,\cKf.(-\ri\nabla-\magp)]
&=-\ri\sigma_i(\nabla_i\cKf_j)(-\ri\nabla_j-\magp_j)+\ri\sigma.(\cKf\times\magf)\nonumber\\
\label{eq:commDQ1}
&=-\ri\sigma_i(\nabla_i\cKf_j)(-\ri\nabla_j-\magp_j)
\end{align}
since $\magf$ is parallel to $\cKf$. 
Next note that for any vectors of operators $F$ and $G$ 
\begin{equation}
\label{eq:gensigmacomm}
[\sigma.F,\sigma.G]
=\sigma_i\sigma_jF_iG_j-\sigma_j\sigma_iG_jF_i
=[F_i,G_i]+\ri\epsilon_{ijk}\sigma_k\{F_i,G_j\}.
\end{equation}
Since $\epsilon_{ijk}\epsilon_{jlm}=\delta_{kl}\delta_{im}-\delta_{km}\delta_{il}$ we get
\begin{align*}
&\epsilon_{ijk}\{-\ri\nabla_i-\magp_i,\cX_j\}
=\epsilon_{ijk}\epsilon_{jlm}\{-\ri\nabla_i-\magp_i,\nabla_l\cKf_m\}\\
&\qquad{}=2(\nabla_k\cKf_i-\nabla_i\cKf_k)(-\ri\nabla_i-\magp_i)
-\ri(\nabla_i\nabla_k\cKf_i-\nabla_i\nabla_i\cKf_k)\\
&\qquad{}=4(\nabla_k\cKf_i)(-\ri\nabla_i-\magp_i)-\sfrac43(\Div\cKf)(-\ri\nabla_k-\magp_k)-\ri\sfrac43\nabla_k(\Div\cKf)
\end{align*}
with the help of \eqref{eq:cKfdefpr}.
Combined with \eqref{eq:gensigmacomm} it follows that
\begin{align}
[\Dirac,\sigma.\cX]
&=[-\ri\nabla_i-\magp_i,\cX_i]+\ri\epsilon_{ijk}\sigma_k\{-\ri\nabla_i-\magp_i,\cX_j\}\nonumber\\
&=-\ri\Div\cX+\ri\epsilon_{ijk}\sigma_k\{-\ri\nabla_i-\magp_i,\cX_j\}\nonumber\\
\label{eq:commDQ2}
&=4\ri\sigma_k(\nabla_k\cKf_i)(-\ri\nabla_i-\magp_i)-\ri\sfrac43(\Div\cKf)\Dirac+\sfrac43\sigma.\nabla(\Div\cKf).
\end{align}
Note that $[\Dirac,\Div\cKf]=-\ri\sigma.\nabla(\Div\cKf)$. 
Combined with \eqref{eq:commDQ1} and \eqref{eq:commDQ2} we now get
\[
[\Dirac,\OpQ]=-\sfrac13\sigma.\nabla(\Div X)-\ri\sfrac13(\Div X)\Dirac
=-\ri\sfrac13\Dirac\bigl((\Div\cKf)\,\cdot\,\bigr).
\]
However
\[
[\OpQ,\wei]=-\ri\nabla_\cKf\wei=-\ri\sfrac13(\Div\cKf)\wei
\]
by \eqref{eq:XDabsXa}.
Combining the previous two equations we finally get
\[
[\Dirac\wei,\OpQ]=[\Dirac,\OpQ]\wei-\Dirac\bigl([\OpQ,\wei]\,\cdot\,\bigr)
=-\ri\sfrac13\Dirac\bigl((\Div\cKf)\wei-(\Div\cKf)\wei\bigr)
=0.
\]

With the help of \eqref{eq:XDabsXa} 
\begin{equation}
\label{eq:commQS1}
[\cKf.(-\ri\nabla-\magp),\OpS]
=-\ri\bigl[\nabla_\cKf,\wei[-1]\sigma.\cKf\bigr]
=\ri\sfrac13(\Div\cKf)\wei[-1]\sigma.\cKf-\ri\wei[-1]\sigma_i\nabla_\cKf\cKf_i.
\end{equation}
On the other hand, \eqref{eq:PauliMatId} and \eqref{eq:XxCurlX} lead to
\[
\label{eq:commQS2}
[\sigma.\cX,\OpS]
=2\ri\wei[-1]\sigma.(\cX\times\cKf)
=4\ri\wei[-1]\sigma_i\nabla_\cKf\cKf_i-\ri\sfrac43(\Div\cKf)\OpS.
\]
Since $\OpS$ clearly commutes with (multiplication by) $\Div\cKf$ 
we can now combine this with \eqref{eq:commQS1} and \eqref{eq:commQS2} to get $[\OpQ,\OpS]=0$.

For the final part \eqref{eq:PauliMatId} helps give
\begin{align*}
\{\OpS,\Dirac\wei\}
&=\wei[-1]\sigma_i\sigma_j\cKf_i(-\ri\nabla_j-\magp_j)(\wei\,\cdot\,)
+\sigma_j\sigma_i(-\ri\nabla_j-\magp_j)\bigl(\cKf_i\,\cdot\,\bigr)\\
&=(\sigma_i\sigma_j+\sigma_j\sigma_i)\cKf_i(-\ri\nabla_j-\magp_j)
-\ri\wei[-1]\sigma_i\sigma_j\cKf_i\nabla_j\wei
-\ri\sigma_j\sigma_i\nabla_j\cKf_i\\
&=2\cKf.(-\ri\nabla-\magp)
-\ri\wei[-1]\nabla_\cKf\wei-\ri\Div\cKf
+\wei[-1]\sigma.(\cKf\times\nabla\wei)+\sigma.\cX\\
&=2\cKf.(-\ri\nabla-\magp)
-\ri\sfrac{4}{3}\Div\cKf
+\sfrac12\sigma.\cX+\sfrac12\wei[-1]\cKf.\cX\OpS,
\end{align*}
where \eqref{eq:XDabsXa} and \eqref{eq:altCurlX} have been used in the last step.
\end{proof}

The commutator relations given in Proposition \ref{prop:baspropDQS}
lead naturally to a weighted version of $L^2$. Let $\wLspin$ denote 
the set of spinors $\phi:\R^3\to\C^2$ satisfying
\[
\int\abs{\phi(\bfx)}^2\wei(\bfx)\,\rd^3\bfx<+\infty.
\]
We denote the corresponding norm and inner product by $\wnorm{\cdot}$ and $\wipd{\cdot}{\cdot}$ respectively.
Note that $\cssfX$ is dense in $\wLspin$.

Since $\Dirac$ defined on $C^\infty_0$ is symmetric in $\Lspin$, 
$\Dirac\wei$ defined on $\cssfX$ is symmetric in $\wLspin$.
We can use \eqref{eq:XDabsXa} to help calculate the formal adjoint of $\OpQ$ in $\wLspin$ as
\begin{align*}
\OpQ^*&=\wei^{-1}(-\ri\nabla-\magp).(\wei\cKf\,\cdot\,)+\sfrac14\cX.\sigma+\sfrac23\ri\Div\cKf\\
&=\cKf.(-\ri\nabla-\magp)-\ri\wei[-1]\nabla_\cKf\wei-\ri\Div\cKf+\sfrac14\cX.\sigma+\sfrac23\ri\Div\cKf
=\OpQ.
\end{align*}
It follows that $\OpQ$ defined on $\cssfX$ is also symmetric in $\wLspin$. 
The operator $\OpS$ is bounded and self-adjoint on both $\Lspin$ and $\wLspin$.

Define operators
\[
\proj{\pm}=\sfrac12(I_2\pm\OpS).
\]
Clearly $\proj{+}+\proj{-}=I_2$ while the identity $\OpS^2=I_2$ means
$\proj{+}$ and $\proj{-}$ are pointwise projections on $\C^2$ 
as well as complementary orthogonal projections on $\wLspin$. 
Furthermore $\proj{\pm}\OpS=\pm\proj{\pm}$ while $\proj{\pm}$ commutes with multiplication
by scalar functions. On $\sfX$ Proposition \ref{prop:baspropDQS}(ii) gives $[\proj{\pm},\OpQ]=0$ 
while
\begin{equation}
\label{eq:PpmDPpmgen}
\proj{\pm}\Dirac\wei\proj{\pm}
=\sfrac14(\OpS\pm I_2)(\OpS\Dirac\wei+\Dirac\wei\OpS)
=\pm\sfrac12\proj{\pm}\{\OpS,\Dirac\wei\}.
\end{equation}
Let $\odD{\pm}=\proj{\pm}\Dirac\wei\proj{\mp}$ denote the ``off-diagonal'' components of $\Dirac$. 
By Proposition \ref{prop:baspropDQS}(i) 
\[
\odD{\mp}\proj{\pm}\OpQ
=\proj{\mp}\Dirac\wei\OpQ\proj{\pm}
=\proj{\mp}\OpQ\Dirac\wei\proj{\pm}
=\OpQ\proj{\mp}\odD{\mp}.
\]
For $\phi,\psi\in\cssfX$ the above observations now lead to 
$\wipd{\phi}{\odD{\pm}\psi}=\wipd{\odD{\mp}\phi}{\psi}$,
\begin{equation}
\label{eq:TPcommrel}
\wipd{\proj{\pm}\OpQ\phi}{\odD{\pm}\psi}
=\wipd{\odD{\mp}\proj{\pm}\OpQ\phi}{\psi}
=\wipd{\odD{\mp}\phi}{\proj{\mp}\OpQ\psi}
\end{equation}
and
\begin{equation}
\label{eq:mnormprojsplit}
\wnorm{\proj{+}\phi+\proj{-}\psi}^2
=\wnorm{\proj{+}\phi}^2+\wnorm{\proj{-}\psi}^2.
\end{equation}

\medskip

The addition of condition \eqref{eq:simrotcond} allows an important 
simplification in Proposition \ref{prop:baspropDQS}(iii); this leads to the following.

\begin{prop}
\label{prop:normspindecomp}
Suppose $\cKf$ satisfies \eqref{eq:simrotcond}.
For all $\phi\in\cssfX$ we have
\[
\wnorm{\Dirac\wei\phi}^2
=\wnorm{\odD{+}\phi}^2+\wnorm{\odD{-}\phi}^2+\wnorm{\OpQ\phi}^2.
\]
\end{prop}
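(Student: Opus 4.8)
The plan is to expand $\wnorm{\Dirac\wei\phi}^2$ by decomposing $\phi=\proj{+}\phi+\proj{-}\phi$ and using that $\Dirac\wei$ splits into diagonal and off-diagonal pieces with respect to the orthogonal projections $\proj{\pm}$. Since $\proj{+}$ and $\proj{-}$ are complementary orthogonal projections on $\wLspin$ (and $\Dirac\wei$ is symmetric there), I would first write
\[
\Dirac\wei=\proj{+}\Dirac\wei\proj{+}+\proj{-}\Dirac\wei\proj{-}+\odD{+}+\odD{-},
\]
where $\odD{\pm}=\proj{\pm}\Dirac\wei\proj{\mp}$. The key input from \eqref{eq:simrotcond} is that $\cKf.\cX=0$, so Proposition \ref{prop:baspropDQS}(iii) simplifies to $\{\Dirac\wei,\OpS\}=2\OpQ$. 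Feeding this into \eqref{eq:PpmDPpmgen} gives the crucial identity $\proj{\pm}\Dirac\wei\proj{\pm}=\pm\proj{\pm}\OpQ$ (using $\proj{\pm}$ commutes with $\OpQ$ by Proposition \ref{prop:baspropDQS}(ii)), so the diagonal blocks of $\Dirac\wei$ are controlled exactly by $\OpQ$.

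Next I would compute the squared norm block by block. Applying $\Dirac\wei$ to $\phi$ and projecting, the diagonal contribution is $\proj{+}\OpQ\phi-\proj{-}\OpQ\phi$ and the off-diagonal contribution is $\odD{+}\phi+\odD{-}\phi$. Because $\odD{+}\phi\in\mathrm{ran}\,\proj{+}$ and $\odD{-}\phi\in\mathrm{ran}\,\proj{-}$ live in orthogonal subspaces (and similarly the two diagonal pieces are orthogonal by \eqref{eq:mnormprojsplit}), the cross terms within each group vanish. The remaining cross terms between the diagonal and off-diagonal parts are where I must be careful: the diagonal term $\proj{+}\OpQ\phi$ pairs against $\odD{+}\phi$, and this is precisely the quantity controlled by \eqref{eq:TPcommrel}, which relates $\wipd{\proj{\pm}\OpQ\phi}{\odD{\pm}\psi}$ to $\wipd{\odD{\mp}\phi}{\proj{\mp}\OpQ\psi}$. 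I expect the arithmetic to show these mixed terms cancel in pairs once the signs from $\proj{+}\OpQ\phi-\proj{-}\OpQ\phi$ are accounted for, leaving
\[
\wnorm{\Dirac\wei\phi}^2=\wnorm{\proj{+}\OpQ\phi}^2+\wnorm{\proj{-}\OpQ\phi}^2+\wnorm{\odD{+}\phi}^2+\wnorm{\odD{-}\phi}^2.
\]

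Finally, since $\proj{+}\OpQ\phi$ and $\proj{-}\OpQ\phi$ are orthogonal components of $\OpQ\phi$, \eqref{eq:mnormprojsplit} collapses the first two terms to $\wnorm{\OpQ\phi}^2$, yielding the stated identity. The main obstacle I anticipate is the bookkeeping of the cross terms: one must verify both that the genuinely orthogonal pieces contribute nothing and that the symmetric pairing relation \eqref{eq:TPcommrel} forces the diagonal--off-diagonal cross terms to cancel rather than merely combine. Getting the signs right when squaring $\proj{+}\OpQ\phi-\proj{-}\OpQ\phi+\odD{+}\phi+\odD{-}\phi$ is the delicate step; everything else is orthogonality and the simplified anticommutator relation. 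Throughout I would work with $\phi\in\cssfX$, where all the operator identities from Proposition \ref{prop:baspropDQS} are valid and the integrations by parts used to establish symmetry of $\OpQ$ and $\Dirac\wei$ in $\wLspin$ cause no boundary issues.
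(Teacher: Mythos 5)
Your proposal is correct and follows essentially the same route as the paper's proof: simplify Proposition \ref{prop:baspropDQS}(iii) to $\{\OpS,\Dirac\wei\}\phi=2\OpQ\phi$ under \eqref{eq:simrotcond}, use \eqref{eq:PpmDPpmgen} to identify the diagonal blocks as $\pm\proj{\pm}\OpQ$, split the norm with \eqref{eq:mnormprojsplit}, and cancel the two diagonal--off-diagonal cross terms via \eqref{eq:TPcommrel} (applied with $\psi=\phi$), whose opposite signs indeed make them cancel exactly as you anticipated. The final collapse of $\wnorm{\proj{+}\OpQ\phi}^2+\wnorm{\proj{-}\OpQ\phi}^2$ to $\wnorm{\OpQ\phi}^2$ by \eqref{eq:mnormprojsplit} is also precisely how the paper concludes.
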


\begin{proof}
By Proposition \ref{prop:baspropDQS}(iii) $\{\OpS,\Dirac\wei\}\phi=2\OpQ\phi$. 
Since $\proj{+}+\proj{-}=I_2$ and $\proj{\pm}^2=\proj{\pm}$, \eqref{eq:PpmDPpmgen} now leads to
\begin{align*}
\Dirac\wei\phi&=\proj{+}\Dirac\wei\proj{+}\phi+\proj{+}\Dirac\wei\proj{-}\phi+
\proj{-}\Dirac\wei\proj{+}\phi+\proj{-}\Dirac\wei\proj{-}\phi\\
&=\proj{+}\bigl(\OpQ\phi+\odD{+}\phi\bigr)
+\proj{-}\bigl(\odD{-}\phi-\OpQ\phi\bigr).
\end{align*}
Using \eqref{eq:mnormprojsplit} it follows that
\begin{align*}
\wnorm{\Dirac\wei\phi}^2
&=\wnorm{\proj{+}\OpQ\phi+\odD{+}\phi}^2
+\wnorm{\odD{-}\phi-\proj{-}\OpQ\phi}^2\\
&=\wnorm{\odD{+}\phi}^2+\wnorm{\odD{-}\phi}^2
+\wnorm{\proj{+}\OpQ\phi}^2+\wnorm{\proj{-}\OpQ\phi}^2\\
&\qquad{}
+2\re\wipd{\proj{+}\OpQ\phi}{\odD{+}\phi}-2\re\wipd{\odD{-}\phi}{\proj{-}\OpQ\phi}.
\end{align*}
The terms in the final line cancel by \eqref{eq:TPcommrel}.
The result now follows from \eqref{eq:mnormprojsplit}.
\end{proof}

\section{Eigenvalues of $\OpQ$}

Although we have not defined $\OpQ$ as a(n unbounded) self-adjoint operator
we are able to limit potential eigenvalues when the integral curves of $\cKf$ are closed loops.

\begin{prop}
\label{prop:specQ}
Suppose the conformal Killing field $\cKf$ satisfies \eqref{eq:simrotcond} and has a closed integral curve $\gamma$ of period $\tau$.
Also suppose $\OpQ\phi=\lambda\phi$ for some $\phi\in\sfX$ and constant $\lambda$.
If $\phi$ is non-trivial on $\gamma$ then $\lambda\in(2\Z+1)\tau^{-1}\pi$; in particular, $\lambda\neq0$.
\end{prop}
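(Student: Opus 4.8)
The plan is to restrict the eigenvalue equation $\OpQ\phi=\lambda\phi$ to the closed integral curve $\gamma$ and solve the resulting scalar ODE explicitly, extracting a monodromy (periodicity) condition that forces $\lambda$ into the stated discrete set. The key observation is that along $\gamma$ the operator $\OpQ$ should reduce to a first-order ordinary differential operator in the curve parameter $t$, because $\gamma'(t)=\cKf(\gamma(t))$ means the transport term $\cKf.(-\ri\nabla)$ becomes $-\ri\,\rd/\rd t$, while the remaining pieces of $\OpQ$ — namely $-\cKf.\magp$, $\frac14\sigma.\cX$ and $-\frac23\ri\Div\cKf$ — become explicit functions of $t$ once evaluated on $\gamma$.

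First I would set $\psi(t)=\phi(\gamma(t))$ and compute $\OpQ\phi$ restricted to $\gamma$. The eigenvalue equation becomes a linear first-order system $-\ri\psi'(t)+M(t)\psi(t)=\lambda\psi(t)$, where $M(t)$ collects the potential, spin and divergence terms. To pin down the periodicity condition I need the solution operator around one period. Two features are decisive here: the flux term $\int_0^\tau\cKf.\magp(\gamma(t))\,\rd t=0$ from \eqref{eq:cloloopflux0}, and the divergence term $\int_0^\tau(\Div\cKf)(\gamma(t))\,\rd t=0$ from \eqref{eq:0intDivXloop}. These two vanishing integrals ensure that the contributions of $-\cKf.\magp$ and $-\frac23\ri\Div\cKf$ produce no net phase (or no net scaling) over one loop. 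The spin term $\frac14\sigma.\cX$ is where the nontrivial phase comes from: using \eqref{eq:HUevalint}, $\int_0^\tau\abs{\cX(\gamma(t))}\,\rd t=4\pi$, so this term integrates to a definite quantity. Because $\OpS\phi=\wei[-1]\sigma.\cKf\,\phi$ and $\cKf$, $\cX$ are parallel along the relevant directions only through the frame structure, I expect $\phi$ on $\gamma$ to decompose into the $\proj{\pm}$ eigenspaces, on each of which $\sigma.\cX$ acts as $\pm\abs{\cX}$ (since $\cX=\abs{\cX}\icB$ and $\icB$ is the spin axis). On each such component the equation integrates to give a solution whose value after one period is multiplied by $\exp\bigl(\ri\lambda\tau \mp \tfrac{\ri}{4}\!\int_0^\tau\abs{\cX}\,\rd t\bigr)=\exp\bigl(\ri\lambda\tau\mp\ri\pi\bigr)$.

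The periodicity requirement $\psi(\tau)=\psi(0)$ (which holds because $\phi\in\sfX$ is single-valued and $\gamma$ is a closed loop of period $\tau$) then forces $\lambda\tau\mp\pi\in2\pi\Z$, i.e. $\lambda\in(2\Z+1)\tau^{-1}\pi$, and in particular $\lambda\neq0$. I would finish by noting that the hypothesis ``$\phi$ non-trivial on $\gamma$'' guarantees the relevant component does not vanish identically, so the monodromy condition genuinely applies rather than being vacuous.

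The main obstacle I anticipate is verifying cleanly that $\OpQ$ really does restrict to the claimed scalar first-order operator on $\gamma$, in particular that the spin term $\frac14\sigma.\cX$ acts diagonally as $\pm\frac14\abs{\cX}$ on the $\proj{\pm}$-components and that these components are preserved along $\gamma$. This requires knowing $\icB$ (the spin axis) is constant along $\gamma$ — which was established in Section \ref{sec:srot} via \eqref{eq:DXYDZY} — so that $\proj{\pm}$ commute with the flow and the two-component equation decouples. Confirming this decoupling, and correctly tracking the factor giving exactly $\pm\pi$ rather than some other multiple, is the delicate bookkeeping step; everything else is a direct integration using the three loop integrals already computed.
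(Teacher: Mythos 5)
Your overall strategy --- restrict $\OpQ\phi=\lambda\phi$ to $\gamma$, decouple the resulting first-order system into two scalar ODEs via a spin decomposition that is constant along the loop, and extract a monodromy condition using the three loop integrals \eqref{eq:0intDivXloop}, \eqref{eq:HUevalint}, \eqref{eq:cloloopflux0} --- is essentially the paper's, and it does work. But the step you yourself flag as the delicate one contains a genuine error: $\proj{\pm}=\frac12(I_2\pm\OpS)$ are the spin projections along $\icT=\wei[-1]\cKf$, \emph{not} along $\icB$; the spin axis of $\OpS$ is the direction of $\cKf$. Since \eqref{eq:simrotcond} gives $\cKf.\cX=0$, identity \eqref{eq:PauliMatId} yields $\{\sigma.\cKf,\sigma.\cX\}=2\,\cKf.\cX\,I_2=0$, so $\sigma.\cX$ \emph{anti-commutes} with $\OpS$; it is off-diagonal with respect to $\proj{\pm}$, mapping the range of $\proj{+}$ into the range of $\proj{-}$. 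Hence your central claim that $\sigma.\cX$ ``acts as $\pm\abs{\cX}$ on the $\proj{\pm}$ eigenspaces'' is false, and likewise $\proj{\pm}$ do not commute with $-\ri\,\rd/\rd t$ along $\gamma$ (the frame $\icT$ rotates around the loop); the fact that $[\OpQ,\OpS]=0$ at all (Proposition \ref{prop:baspropDQS}(ii)) rests on a cancellation between the derivative term and the spin term, neither of which commutes with $\OpS$ separately.

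There are two ways to repair this, differing in which decomposition is used. (a) Closest to your intent: along $\gamma$ use the \emph{constant} projections $\frac12(I_2\pm\sigma.\Nvec)$, where $\Nvec=\icB(\gamma(t))$ is constant on $\gamma$. Restricted to $\gamma$ the operator $\OpQ$ becomes $-\ri\,\rd/\rd t+\frac14\abs{\cX}\,\sigma.\Nvec-\cKf.\magp-\frac23\ri\Div\cKf$ (the only derivative in $\OpQ$ is along $\cKf=\gamma'$, so the restriction is a well-defined ODE), and every term commutes with the constant matrix $\sigma.\Nvec$; the system then decouples with coefficients $\pm\frac14\abs{\cX}$, and your monodromy computation $\lambda\tau\mp\pi\in2\pi\Z$ goes through verbatim. (b) The paper's route: keep $\proj{\pm}$, whose virtue is that $[\OpQ,\proj{\pm}]=0$ globally, so $\phi_\pm=\proj{\pm}\phi$ are again eigenfunctions; but since $\sigma.\cX$ is not diagonal in this splitting, one instead writes $\phi_\pm=u_\pm\ues{\pm}$ in the moving unit spinors $\ues{\pm}=\proj{\pm}\ues{0}$, with $\ues{0}$ a constant eigenvector of $\sigma.\Nvec$, and projects the equation onto $\ues{\pm}$; the effective coefficient then comes out as $+\frac14\abs{\cX}$ for \emph{both} components, not $\pm\frac14\abs{\cX}$. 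That your bookkeeping still lands on $(2\Z+1)\tau^{-1}\pi$ is an accident of the arithmetic: $+\pi$ and $-\pi$ agree modulo $2\pi\Z$.
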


\begin{proof}
From the discussion after \eqref{eq:altCurlXcp} we know that $\gamma$ lies in a plane $\pP$ with normal $\Nvec$
where $\Nvec=\icB(\gamma(t))$ for any $t$. 
Choose $\ues{0}\in\C^2$ with $\sigma.\Nvec\ues{0}=\ues{0}$ and $\abs{\ues{0}}^2=2$. Set $\ues{\pm}=\proj{\pm}\ues{0}$.
For any $\Pvec\in\R^3$ \eqref{eq:PauliMatId} leads to
\begin{equation}
\label{eq:e0s.Ze0gen}
\ipd{\ues{0}}{\sigma.\Pvec\ues{0}}
=\sfrac12\bigipd{\ues{0}}{(\sigma.\Nvec\sigma.\Pvec+\sigma.\Pvec\sigma.\Nvec)\ues{0}}
=2\Nvec.\Pvec.
\end{equation}
We now work on (the image of) $\gamma$.
Then $\ipd{\ues{0}}{\OpS\ues{0}}=0$ by \eqref{eq:simrotcond} and \eqref{eq:e0s.Ze0gen}, so
\[
\abs{\ues{\pm}}^2
=\ipd{\ues{0}}{\proj{\pm}\ues{0}}
=\sfrac12\ipd{\ues{0}}{(I\pm\OpS)\ues{0}}
=1.
\]
As $\XxcX.\cX=0$ \eqref{eq:PauliMatId} and \eqref{eq:e0s.Ze0gen} also lead to 
$\ipd{\ues{0}}{\OpS\sigma.\cX\ues{0}}=\wei[-1]\ipd{\ues{0}}{(\cKf.\cX+i\sigma.\XxcX)\ues{0}}=0$.
Combined with a further use of \eqref{eq:e0s.Ze0gen} we then get
\[
\ipd{\ues{0}}{\proj{\pm}\sigma.\cX\ues{0}}
=\sfrac12\ipd{\ues{0}}{(I\pm\OpS)\sigma.\cX\ues{0}}
=\Nvec.\cX
=\abs{\cX}.
\]
Since $\OpQ\ues{\pm}=\proj{\pm}\OpQ\ues{0}$ while $\ues{0}$ is constant we now have
\begin{align*}
\ipd{\ues{\pm}}{\OpQ\ues{\pm}}
&=\sfrac14\ipd{\ues{0}}{\proj{\pm}\sigma.\cX\ues{0}}
-\Bigl(\sfrac23i\Div\cKf+\cKf.\magp\Bigr)\ipd{\ues{\pm}}{\proj{\pm}\ues{0}}\\
&=\sfrac14\abs{\cX}-\sfrac23i\Div\cKf-\cKf.\magp.
\end{align*}

Set $\phi_\pm=\proj{\pm}\phi$. Since $[\OpQ,\OpS]\phi=0$ by Proposition \ref{prop:baspropDQS}(ii)
we have $\OpQ\phi_\pm=\lambda\phi_\pm$.
At any $x\in\nzX$ the range of $\proj{\pm}$ in $\C^2$ is a $1$-dimensional space 
spanned by the non-zero vector $\ues{\pm}$.
Thus we can write $\phi_{\pm}(\gamma(t))=u_{\pm}(t)\,\ues{\pm}(\gamma(t))$ for some $u_\pm:\R\to\C$.
Then $\OpQ\phi_\pm=-i(\rd/\rd t)u_\pm\,\ues{\pm}+u_\pm\OpQ\ues{\pm}$ so
\[
\ipd{\ues{\pm}}{\OpQ\phi_\pm}
=-i\frac{\rd}{\rd t}u_\pm+\Bigl(\sfrac14\abs{\cX}-\sfrac23i\Div\cKf-\cKf.\magp\Bigr)u_\pm.
\]
On the other hand $\lambda\phi_\pm=\lambda u_\pm\ues{\pm}$ so 
$\ipd{\ues{\pm}}{\lambda\phi_\pm}=\lambda u_\pm\abs{\ues{\pm}}^2=\lambda u_\pm$. 
The equation $\OpQ\phi_\pm=\lambda\phi_\pm$ thus gives
\[
\frac{\rd}{\rd t}u_\pm+i\Bigl(\sfrac14\abs{\cX}-\sfrac23i\Div\cKf-\cKf.\magp-\lambda\Bigr)u_\pm=0.
\]
Since $\gamma$ is a closed integral curve with period $\tau$ it follows that we must have
\[
\int_0^\tau\Bigl(\sfrac14\abs{\cX}-\sfrac23i\Div\cKf-\cKf.\magp-\lambda\Bigr)(\gamma(t))\,\rd t\in 2\pi\Z.
\]
The result now follows from \eqref{eq:0intDivXloop}, \eqref{eq:HUevalint} and \eqref{eq:cloloopflux0}.
\end{proof}

\section{Regularity of zero modes}
\label{sec:regZMs}

Suppose $\magp$ is a smooth magnetic potential.
Then $\Dirac$ is essentially self-adjoint on $C^\infty_0$ (see \cite[Theorem 4.3]{T} for example); 
in particular, for any $\psi\in\Dom(\Dirac)$ we can find $\{\psi_n\}_{n\in\N}\subset C^\infty_0$ 
with $\psi_n\to\psi$ and $\Dirac\psi_n\to\Dirac\psi$ in $\Lspin$. 
However $\Dirac$ is also an elliptic operator; if $\Dirac\psi\in C^\infty$ for some $\psi\in\Dom(\Dirac)$ 
(which is clearly the case for zero modes) elliptic regularity then gives $\psi\in C^\infty$.
In this case it is clear from the proof of \cite[Theorem 4.3]{T} that we may further assume
our approximating sequence satisfies $\psi_n\to\psi$ pointwise on $\R^3$.

Now suppose $\cKf$ is a conformal Killing field satisfying \eqref{eq:simrotcond}. 
If $\psi$ is a potential zero mode we need a more particular approximating sequence 
that has $\psi_n\in\cssfX$ and $\Dirac\psi_n\to\Dirac\psi$ in $\wLspin$. We begin by considering these 
conditions separately.

Choose a non-increasing $\pcf[]\in C^\infty(\R)$ with $\pcf[](t)=1$ for $t<0$ and $\pcf[](t)=0$ for $t>1$.
For any $\Rc>1$ define $\pcf\in C^\infty(\R)$ by
\[
\pcf(t)=\begin{cases}
1&\text{if $t\le1$,}\\
\pcf[](\log\log t-\log\log\Rc)&\text{if $t>1$.}
\end{cases}
\]
Also define $\Rcf\in C^\infty_0$ by $\Rcf(\bfx)=\pcf(\abs{\bfx})$.

\begin{lem}
\label{lem:RcfpsiwL}
Suppose $\psi\in\Lspin\cap C^\infty$ and set $\psi_{\Rc}=\Rcf\psi\in C^\infty_0$ for $\Rc>1$.
If $\Dirac\psi\in\wLspin$ then $\Dirac\psi_{\Rc}\to\Dirac\psi$ in $\wLspin$ as $\Rc\to\infty$.
\end{lem}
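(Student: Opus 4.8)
The plan is to apply the Leibniz rule for the first-order operator $\Dirac$ to the product $\psi_\Rc=\Rcf\psi$ and then show that the two resulting pieces both tend to $0$ in $\wLspin$. Since $\Rcf$ is a scalar cutoff, a direct computation gives
\[
\Dirac\psi_\Rc=\Dirac(\Rcf\psi)=\Rcf\vtsp\Dirac\psi-\ri\vtsp\sigma.(\nabla\Rcf)\vtsp\psi,
\]
so that
\[
\Dirac\psi_\Rc-\Dirac\psi=(\Rcf-1)\vtsp\Dirac\psi-\ri\vtsp\sigma.(\nabla\Rcf)\vtsp\psi.
\]
I would treat the two terms on the right separately: the first is a ``zeroth order'' error that vanishes because $\Rcf\to1$, while the second collects the derivatives falling on the cutoff and is where the real work lies.

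For the first term I would use dominated convergence in $\wLspin$. For fixed $\bfx$ we have $\Rcf(\bfx)=\pcf(\abs{\bfx})\to1$ as $\Rc\to\infty$, since for $\abs{\bfx}>1$ the argument $\log\log\abs{\bfx}-\log\log\Rc\to-\infty$ and $\pcf[]\equiv1$ near $-\infty$. As $0\le\Rcf\le1$ we have the pointwise majorisation $\abs{(\Rcf-1)\Dirac\psi}^2\wei\le\abs{\Dirac\psi}^2\wei$, and the majorant is integrable precisely because $\Dirac\psi\in\wLspin$ by hypothesis. Hence $\wnorm{(\Rcf-1)\Dirac\psi}\to0$.

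The second term is the main point. Using $(\sigma.\bfa)^2=\abs{\bfa}^2I_2$ we have $\abs{\sigma.(\nabla\Rcf)\psi}=\abs{\nabla\Rcf}\abs{\psi}$, so
\[
\wnorm{\sigma.(\nabla\Rcf)\psi}^2=\int\abs{\nabla\Rcf(\bfx)}^2\abs{\psi(\bfx)}^2\wei(\bfx)\,\rd^3\bfx.
\]
Since $\Rcf(\bfx)=\pcf(\abs{\bfx})$, the chain rule gives $\abs{\nabla\Rcf(\bfx)}=\abs{\pcf'(\abs{\bfx})}$, and differentiating the double logarithm shows $\pcf'$ is supported in the annulus $\Rc\le\abs{\bfx}\le\Rc^{e}$ with $\abs{\pcf'(t)}\le C\vtsp t^{-1}(\log t)^{-1}$ there. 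On the other hand Proposition \ref{prop:gencKfchar} shows $\cKf$ is a polynomial vector field of degree at most $2$, so $\wei=\abs{\cKf}\le C'(1+\abs{\bfx}^2)$. Combining these, on the support of $\nabla\Rcf$ one gets the bound $\abs{\nabla\Rcf}^2\wei\le C''(\log\Rc)^{-2}$, whence
\[
\wnorm{\sigma.(\nabla\Rcf)\psi}^2\le\frac{C''}{(\log\Rc)^2}\int_{\abs{\bfx}\ge\Rc}\abs{\psi}^2\,\rd^3\bfx\le\frac{C''}{(\log\Rc)^2}\norm{\psi}_{\Lspin}^2\to0,
\]
using $\psi\in\Lspin$. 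Together with the first term this gives the claimed convergence.

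The crux --- and the reason for the double-logarithmic scale built into $\pcf$ --- is exactly this balancing in the gradient term: the weight $\wei$ may grow quadratically, which would precisely cancel the $\abs{\bfx}^{-2}$ decay of a naive gradient cutoff and leave only a bounded, non-decaying factor. The extra $(\log\abs{\bfx})^{-2}$ produced by differentiating $\log\log\abs{\bfx}$ is what renders the integrand uniformly small on the cutoff region, so that the tail control afforded by $\psi\in\Lspin$ can be exploited. I would expect the only genuinely delicate points to be the chain-rule estimate on $\pcf'$ and the appeal to the degree bound on $\cKf$; everything else is routine.
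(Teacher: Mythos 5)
Your proof is correct and takes essentially the same route as the paper's: the identical Leibniz splitting of $\Dirac\psi_{\Rc}-\Dirac\psi$ into $(\Rcf-1)\vtsp\Dirac\psi$ and $-\ri\vtsp\sigma.(\nabla\Rcf)\vtsp\psi$, dominated convergence (using $\Dirac\psi\in\wLspin$) for the first piece, and for the second the same uniform bound $\abs{\nabla\Rcf}^2\wei\le C(\log\Rc)^{-2}$ on $\supp(\nabla\Rcf)$ coming from the at-most-quadratic growth of $\cKf$, followed by $\psi\in\Lspin$. The paper packages the gradient estimate as $\norm{\wei[1/2]\nabla\Rcf}_{L^\infty}\to0$ rather than as a pointwise bound, but the content, including your observation that the double-logarithmic cutoff is exactly what beats the quadratic weight, is the same.
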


\begin{proof}
Note that $\supp(\nabla\Rcf)=\{\bfx:\Rc\le\abs{\bfx}\le\Rc^e\}$ while, for $\abs{\bfx}>1$, 
\[
\nabla\Rcf(\bfx)
=\frac{\bfx}{\abs{\bfx}^2\log\abs{\bfx}}\,\pcf[]'(\log\log\abs{\bfx}-\log\log\Rc).
\]
As $\wei[1/2](\bfx)\le C(1+\abs{\bfx})$ for some constant $C$ we then get
\begin{equation}
\label{eq:LinfrDchi0}
\norm{\wei[1/2]\nabla\Rcf}_{L^\infty}\le2C\norm{\pcf[]'}_{L^\infty}\sup_{\Rc\le\abs{\bfx}\le\Rc^e}\frac1{\log\abs{\bfx}}
=\frac{2C\norm{\pcf[]'}_{L^\infty}}{\log\Rc}\longrightarrow 0
\end{equation}
as $\Rc\to\infty$. Now $\abs{\sigma.\nabla\Rcf\vtsp\psi}=\abs{\nabla\Rcf}\vtsp\abs{\psi}$ so
\begin{align*}
&\int\abs{\Dirac(\psi_{\Rc}-\psi)(\bfx)}^2\wei(\bfx)\,\rd^3\bfx\\
&\qquad{}\le2\int\abs{\nabla\Rcf(\bfx)}^2\abs{\psi(\bfx)}^2\wei(\bfx)\,\rd^3\bfx
+2\int\abs{\Rcf(\bfx)-1}^2\abs{\Dirac\psi(\bfx)}^2\wei(\bfx)\,\rd^3\bfx.
\end{align*}
Both terms in the final line tend to $0$ as $\Rc\to\infty$, the first by \eqref{eq:LinfrDchi0}
and the second by dominated convergence (note that $\Rcf(\bfx)\to1$ for all $\bfx$).
\end{proof}

For any $\epsilon\in(0,1)$ define $\ecf\in C^\infty$ by $\ecf(\bfx)=\pcf[1/\epsilon](\wei^{-1}(\bfx))$. 
Thus $\ecf(\bfx)\to1$ as $\epsilon\to0$ for any $\bfx\in\nzX$. Also let $H^1$ denote the first order Sobolev space 
(consisting of functions with $\psi,\nabla \psi\in L^2$).

\begin{lem}
\label{lem:fzXdet}
If $\cKf$ is an admissible field and 
$\psi\in C^\infty_0$ then $\ecf\psi\to\psi$ in $H^1$ as $\epsilon\to0$.
\end{lem}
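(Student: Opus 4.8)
The plan is to write $\nabla(\ecf\psi)=\ecf\nabla\psi+(\nabla\ecf)\psi$ and treat the three resulting $L^2$ limits separately. The convergences $\ecf\psi\to\psi$ and $\ecf\nabla\psi\to\nabla\psi$ in $\Lspin$ are routine applications of dominated convergence: $\abs{\ecf}\le1$, $\ecf(\bfx)\to1$ for every $\bfx\in\nzX$, and the zero set $\R^3\setminus\nzX=\{\wei=0\}$ has Lebesgue measure zero (for an admissible, hence non-trivial, $\cKf$ it is empty or a curve). So the entire content of the lemma is the claim that $\bignorm{(\nabla\ecf)\psi}\to0$.

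First I would compute $\nabla\ecf$ by the chain rule. Since $\ecf=\pcf[1/\epsilon](\wei[-1])$ and $\nabla(\wei[-1])=-\wei[-2]\nabla\wei$, differentiating the double-logarithmic profile produces a factor $\wei\,\abs{\log\wei}^{-1}$ (exactly as in the computation leading to \eqref{eq:LinfrDchi0}), giving
\[
\abs{\nabla\ecf}\le\norm{\pcf[]'}_{L^\infty}\,\frac{\abs{\nabla\wei}}{\wei\,\abs{\log\wei}}
\]
on the shell $S_\epsilon=\{\epsilon^e\le\wei\le\epsilon\}$, which is precisely the support of $\nabla\ecf$. On $S_\epsilon$ we have $\abs{\log\wei}\ge\log(1/\epsilon)$, while \eqref{eq:simexprDX2} shows $\abs{\nabla\wei}^2=\sfrac19(\Div\cKf)^2+\sfrac14\abs{\cX}^2$ is polynomial, hence bounded by a constant $C_K$ on the compact set $K=\supp\psi$. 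With $M=\norm{\psi}_{L^\infty}$ this yields
\[
\bignorm{(\nabla\ecf)\psi}^2\le\frac{C_K M^2\,\norm{\pcf[]'}_{L^\infty}^2}{(\log(1/\epsilon))^2}\int_{K\cap S_\epsilon}\wei[-2]\,\rd^3\bfx .
\]

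The main obstacle, and the reason the cutoff is built from $\log\log$, is the shell integral: since the zero set of an admissible field is one-dimensional (the $x_3$-axis for $\rof$, the circle $\S^1_\mu$ for $\crf$, or empty for $\udf$; see Examples \ref{ex:ud}--\ref{ex:cr}), $\wei[-2]$ is \emph{not} locally integrable there, so any crude bound diverges. To control it I would use the coarea formula together with two facts about the geometry near the zero set: that $\abs{\nabla\wei}$ is bounded below by some $c>0$ there (read off from \eqref{eq:simexprDX2}: $\abs{\nabla\wei}\equiv1$ for $\rof$ and $\abs{\nabla\wei}=\abs{\bfx}\to\mu$ near $\S^1_\mu$ for $\crf$), and that the level set $\{\wei=s\}\cap K$ is a tube of radius $\sim s$ about the zero set, so $\mathcal H^2(\{\wei=s\}\cap K)\le C's$. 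Then
\[
\int_{K\cap S_\epsilon}\wei[-2]\,\rd^3\bfx\le\frac1c\int_{K\cap S_\epsilon}\wei[-2]\abs{\nabla\wei}\,\rd^3\bfx=\frac1c\int_{\epsilon^e}^{\epsilon}s^{-2}\,\mathcal H^2(\{\wei=s\}\cap K)\,\rd s\le\frac{C'}{c}\int_{\epsilon^e}^{\epsilon}s^{-1}\,\rd s=\frac{C'(e-1)}{c}\log(1/\epsilon).
\]
Feeding this back gives $\bignorm{(\nabla\ecf)\psi}^2=O\bigl(1/\log(1/\epsilon)\bigr)\to0$, which is exactly the cancellation the double-log scaling is designed to produce: the thinness of the shell on the $\wei$-scale beats the logarithmic divergence of $\int\wei[-2]$. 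Together with the $L^2$ pieces this gives $\ecf\psi\to\psi$ in $H^1$. If one prefers to avoid coarea, the reduction noted after Example \ref{ex:cr} lets us assume $\cKf$ is one of $\udf,\rof,\crf$, and the shell integral can be evaluated directly in cylindrical or tubular coordinates.
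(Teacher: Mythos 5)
Your proof is correct, and it reaches the same reduction as the paper (same product-rule splitting, same computation of $\nabla\ecf$, same shell $\{\epsilon^e\le\wei\le\epsilon\}$, dominated convergence for the zeroth-order pieces), but the key estimate for $\bignorm{(\nabla\ecf)\psi}$ is handled by a genuinely different mechanism. The paper keeps the logarithmic weight \emph{inside} the integral: it bounds $\abs{\nabla\ecf}^2\le C\,\abs{\nabla\wei}^2\,\ell(\wei)$ with $\ell(r)=1/(r^2\log^2r)$, asserts that $\ell(\wei)\in L^1_\loc$ for any admissible field (this is exactly where the double-log cutoff and the one-dimensionality of $\{\wei=0\}$ enter), and concludes by absolute continuity of the integral, since $\abs{\Delta_\epsilon\cap\supp\psi}\to0$. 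You instead pull the logarithm out as the constant $(\log(1/\epsilon))^{-2}$, which forces you to show the compensating growth bound $\int_{K\cap S_\epsilon}\wei^{-2}\,\rd^3\bfx=O(\log(1/\epsilon))$; you obtain this from the coarea formula together with two geometric inputs (a positive lower bound for $\abs{\nabla\wei}$ near the zero set, and the linear area growth $\mathcal H^2(\{\wei=s\}\cap K)\le C's$), both of which are correct and verifiable on the model fields $\udf$, $\rof$, $\crf$ after the reduction of Section \ref{sec:CharAdmFields} --- the same level of case-checking the paper implicitly relies on to justify $\ell(\wei)\in L^1_\loc$. What your route buys is a quantitative rate, $\bignorm{(\nabla\ecf)\psi}^2=O\bigl(1/\log(1/\epsilon)\bigr)$, whereas the paper's soft argument gives convergence with no rate; what it costs is the extra level-set geometry, which the paper's absolute-continuity argument avoids entirely.
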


\begin{proof}
We have
\[
\nabla\ecf
=(\nabla\wei[-1])\,\pcf[1/\epsilon]'(\wei[-1])
=\frac{\nabla\wei}{\wei\log\wei}\,\pcf[]'(\log\log\wei-\log\log\epsilon).
\]
In particular, $\supp(\nabla\ecf)\subseteq\Delta_{\epsilon}:=\{\bfx:\epsilon^e\le\abs{\wei(\bfx)}\le\epsilon\}$,
while 
\[
\abs{\nabla\ecf}^2\le\abs{\nabla\wei}^2\,\ell(\wei)\,\norm{\pcf[]'}_{L^\infty}
\]
where $\ell(r)=1/(r^2\log^2r)$ for $r>0$ and $\abs{\nabla\wei}^2$ is a smooth function (see \eqref{eq:simexprDX2}).
For any admissible field we have $\ell(\wei)\in L^1_\loc$,
while $\abs{\Delta_\epsilon\cap K}\to0$ as $\epsilon\to0$ for any compact set $K$
(note that $\{\bfx:\wei(\bfx)=0\}$ is either empty or a $1$-dimensional sub-manifold of $\R^3$).  
Now
\[
\int\abs{\nabla(\ecf\psi-\psi)(\bfx)}^2\,\rd^3\bfx
\le2\int\abs{\nabla\ecf(\bfx)}^2\abs{\psi(\bfx)}^2\,\rd^3\bfx
+2\int\abs{\ecf(\bfx)-1}^2\abs{\nabla\psi(\bfx)}^2\,\rd^3\bfx.
\]
On the right hand side both terms tend to $0$ as $\epsilon\to0$, the first from the discussion above
and the second by dominated convergence.
\end{proof}

\begin{rem}
The function $\Rcf$ provides a cut-off for large $\bfx$, while $\ecf$ provides a cut-off near $\{\bfx:\wei(\bfx)=0\}$.
In particular, Lemma \ref{lem:fzXdet} shows that the set $\{\bfx:\wei(\bfx)=0\}$ has (harmonic) capacity $0$, or is $(-1,2)$-null;
in $\R^3$ $1$-dimensional submanifolds are in some sense borderline in this respect 
(see \cite{HM} for further discussion).
\end{rem}

\begin{prop}
\label{prop:approxseqpsi}
Suppose $\psi\in\Lspin\cap C^\infty$. If $\Dirac\psi\in\wLspin$ 
we can find $\{\psi_n\}_{n\in\N}\subset\cssfX$ so that $\psi_n\to\psi$ in $\Lspin$, 
$\psi_n(\bfx)\to\psi(\bfx)$ for $\bfx\in\nzX$, and $\Dirac\psi_n\to\Dirac\psi$ in $\wLspin$.
\end{prop}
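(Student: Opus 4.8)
The plan is to combine the two cutoff constructions from Lemmas \ref{lem:RcfpsiwL} and \ref{lem:fzXdet} into a single approximating sequence. We are given $\psi\in\Lspin\cap C^\infty$ with $\Dirac\psi\in\wLspin$, and we want $\psi_n\in\cssfX=C^\infty_0(\nzX)$ achieving simultaneous convergence in three senses: $\psi_n\to\psi$ in $\Lspin$, pointwise on $\nzX$, and $\Dirac\psi_n\to\Dirac\psi$ in $\wLspin$. The natural candidate is the doubly-cut-off spinor $\ecf\Rcf\psi$, where $\Rcf$ removes the large-$\bfx$ region and $\ecf$ removes a neighbourhood of the zero set $\{\wei=0\}$; since $\Rcf$ has compact support and $\ecf$ vanishes near $\{\wei=0\}$, the product $\ecf\Rcf\psi$ indeed lies in $\cssfX$.

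First I would fix a diagonal scheme: for each $n$ choose $\Rc=\Rc_n\to\infty$ and $\epsilon=\epsilon_n\to0$, setting $\psi_n=\ecf[\epsilon_n]\Rcf[\Rc_n]\psi$. The $\Lspin$-convergence $\psi_n\to\psi$ and the pointwise convergence on $\nzX$ are the easy parts, following from $\Rcf\to1$ everywhere, $\ecf\to1$ on $\nzX$, and dominated convergence (note $\abs{\ecf\Rcf}\le1$ so the pointwise limits are dominated by $\abs{\psi}^2\in L^1$). The substance is the $\wLspin$-convergence of $\Dirac\psi_n$. I would estimate $\Dirac(\psi_n)-\Dirac\psi$ by splitting via the Leibniz rule $\Dirac(\ecf\Rcf\psi)=\ecf\Rcf\Dirac\psi-\ri\,\sigma.\nabla(\ecf\Rcf)\,\psi$, so that the error decomposes into a ``commutator'' piece involving $\nabla(\ecf\Rcf)=\Rcf\nabla\ecf+\ecf\nabla\Rcf$ and a ``cutoff tail'' piece $(\ecf\Rcf-1)\Dirac\psi$. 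The tail piece tends to $0$ in $\wLspin$ by dominated convergence exactly as in the two lemmas, using $\Dirac\psi\in\wLspin$.

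The main obstacle, and the one point requiring genuine care, is controlling the gradient-of-cutoff term in the $\wLspin$ norm, i.e. bounding $\int\abs{\nabla(\ecf\Rcf)}^2\abs{\psi}^2\wei\,\rd^3\bfx$. Here I would treat the two contributions separately. For the $\ecf\nabla\Rcf$ contribution the weighted estimate \eqref{eq:LinfrDchi0} from Lemma \ref{lem:RcfpsiwL} gives $\norm{\wei[1/2]\nabla\Rcf}_{L^\infty}\to0$, so (using $\abs{\ecf}\le1$) this term is bounded by $\norm{\wei[1/2]\nabla\Rcf}_{L^\infty}^2\norm{\psi}_{\Lspin}^2\to0$. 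For the $\Rcf\nabla\ecf$ contribution I must be more delicate, because the Lemma \ref{lem:fzXdet} argument controls $\nabla\ecf$ only in the \emph{unweighted} $L^2$ norm against a compactly supported $\psi$; here the weight $\wei$ is small precisely where $\nabla\ecf$ is supported. In fact this is helpful rather than harmful: on $\supp(\nabla\ecf)\subseteq\Delta_\epsilon=\{\epsilon^e\le\wei\le\epsilon\}$ the weight satisfies $\wei\le\epsilon$, so $\abs{\Rcf\nabla\ecf}^2\abs{\psi}^2\wei\le\epsilon\,\abs{\nabla\ecf}^2\abs{\psi}^2$, and the bound $\abs{\nabla\ecf}^2\le\abs{\nabla\wei}^2\ell(\wei)\norm{\pcf[]'}_{L^\infty}$ together with $\ell(\wei)\in L^1_\loc$ (for admissible $\cKf$) and the compact support supplied by $\Rcf$ makes the remaining integral locally integrable; the extra factor $\epsilon$ then forces convergence to $0$. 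I would phrase this last estimate on a fixed compact set determined by $\supp\Rcf[\Rc_n]$, then let $n\to\infty$ along the diagonal sequence, concluding that all three convergence requirements hold simultaneously.
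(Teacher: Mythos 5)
Your proposal is correct, but the way you control the $\epsilon$-cutoff error is genuinely different from the paper's. The paper never expands $\Dirac(\ecf\Rcf\psi)$ by the Leibniz rule; instead it writes the error as $\Dirac(\psi_n-\Rcf[n]\psi)+\bigl(\Dirac(\Rcf[n]\psi)-\Dirac\psi\bigr)$, handles the second piece by Lemma \ref{lem:RcfpsiwL}, and for the first piece uses Lemma \ref{lem:fzXdet} as a black box: it sets $M_n=\sup\{4(1+\abs{\magp}^2)\wei:\bfx\in\supp\Rcf[n]\}$, picks $\epsilon_n$ so that $\norm{\ecf[\epsilon_n]\Rcf[n]\psi-\Rcf[n]\psi}_{H^1}^2\le M_n^{-1}2^{-n}$, and converts unweighted $H^1$-smallness into $\wLspin$-smallness of $\Dirac\phi_n$ via the pointwise bound $\abs{\Dirac\phi_n}^2\le4(1+\abs{\magp}^2)(\abs{\nabla\phi_n}^2+\abs{\phi_n}^2)$. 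Your route keeps $\ecf\Rcf\Dirac\psi$ intact, so the magnetic potential never gets separated from $\Dirac\psi$ and no $\magp$-dependent constants arise; in exchange you estimate the $\Rcf\nabla\ecf$ term directly in the weighted norm via the observation that $\wei\le\epsilon$ on $\supp(\nabla\ecf)\subseteq\Delta_\epsilon$, which is a neat trick not used in the paper (the paper's Lemma \ref{lem:fzXdet} works unweighted, and the weight is reinstated only through $M_n$). Both arguments ultimately rest on the same diagonal mechanism, and here is the one place you should be more explicit: your constant $C_n=\norm{\pcf[]'}_{L^\infty}^2\sup_{K_n}\bigl(\abs{\nabla\wei}^2\abs{\psi}^2\bigr)\int_{K_n}\ell(\wei)$ on $K_n=\supp\Rcf[\Rc_n]$ can grow without bound as $n\to\infty$ (in particular $\sup_{K_n}\abs{\psi}$ need not be bounded, since $\psi\in\Lspin\cap C^\infty$ is not assumed bounded), so $\epsilon_n$ must be chosen \emph{after} $\Rc_n$, small enough that $\epsilon_n C_n\to0$ --- exactly the role played by $\delta_n=M_n^{-1}2^{-n}$ in the paper. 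Your phrase ``diagonal scheme'' indicates you intend this, but stating the dependence $\epsilon_n=\epsilon_n(\Rc_n)$ explicitly would close the argument; with that made precise, your proof is complete and, if anything, slightly cleaner than the paper's.
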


\begin{proof}
Elliptic regularity gives $\psi\in C^\infty$. 
Set $\psi_1\equiv0$ and $\psi_n=\Rcf[n]\ecf[\epsilon_n]\psi\in\cssfX$ for $n>1$, 
where $\epsilon_n\to0$ but otherwise remains to be chosen.
Then $\psi_n\to\psi$ in $\Lspin$ while $\psi_n(\bfx)\to\psi(\bfx)$ for $\bfx\in\nzX$. 
Now set 
\[
M_n=\sup\bigl\{4(1+\abs{\magp(\bfx)}^2)\wei(\bfx):\bfx\in\supp(\Rcf[n])\bigr\}
\quad\text{and}\quad
\delta_n=M_n^{-1}2^{-n},
\]
so $\delta_n>0$ and $\delta_n\to0$.
Note that $\Rcf[n]\psi\in C^\infty_0$; we can then use 
Lemma \ref{lem:fzXdet} to find $\epsilon_n$ so that $\norm{\phi_n}_{H^1}^2\le\delta_n$,
where $\phi_n=\ecf[\epsilon_n]\Rcf[n]\psi-\Rcf[n]\psi=\psi_n-\Rcf[n]\psi$. Clearly we may further assume $\epsilon_n\to0$. 
Now $\big.\abs{\sigma.\nabla\phi_n}^2\le3\abs{\nabla\phi_n}^2$ and $\abs{\sigma.\magp\phi_n}=\abs{\magp}\vtsp\abs{\phi_n}$ so 
\[
\abs{\Dirac\phi_n}^2\le\sfrac43\abs{\sigma.\nabla\phi_n}^2+4\abs{\sigma.\magp\phi_n}^2
\le4(1+\abs{\magp}^2)\vtsp(\abs{\nabla\phi_n}^2+\abs{\phi_n}^2).
\]
Hence
\begin{align*}
\wnorm{\Dirac(\psi_n-\Rcf[n]\psi)}^2
&\le\int4(1+\abs{\magp(\bfx)}^2)\bigl(\abs{\nabla\phi_n(\bfx)}+\abs{\phi_n(\bfx)}^2\bigr)\wei(\bfx)\,\rd^3\bfx\\
&\le M_n\norm{\phi_n}_{H^1}^2\le 2^{-n},
\end{align*}
so $\Dirac(\psi_n-\Rcf[n]\psi)\to0$ in $\wLspin$.
However $\Dirac(\Rcf[n]\psi)\to\Dirac\psi$ in $\wLspin$ by Lemma \ref{lem:RcfpsiwL}. 
This completes the result.
\end{proof}

\section{Proof of the main result}
\label{sec:pfthm1}

Suppose $\cKf$ is an admissible field and $\psi\in C^\infty$. 
Then $\wei^{-1}\psi,\,\OpQ(\wei^{-1}\psi)\in\sfX$ while
\[
\OpQ(\wei^{-1}\psi)
=\wei^{-1}\Bigl(-\ri\nabla_\cKf\psi+\sfrac14\sigma.\cX\psi
-\sfrac13i\Div\cKf\,\psi-\cKf.\magp\,\psi\Bigr)
\]
(recall \eqref{eq:XDabsXa}); note that, the term in the parentheses on the right hand side is in $C^\infty$.
Since $\wei[-1]\in L^1_\loc$ for any admissible field, we then get
\begin{equation}
\label{eq:Qpsi/wL2loc}
\wei^{-1}\psi,\,\wei\abs{\OpQ(\wei[-1]\psi)}^2\in L^1_\loc.
\end{equation}

\begin{proof}[Proof of Theorem \ref{thm:mainthm}]
Suppose $\psi\in\Lspin$ is a zero mode of $\Dirac$. 
Elliptic regularity gives $\psi\in C^\infty$. 
Let $\{\psi_n\}_{n\in\N}\subset\cssfX$ be as given by Proposition \ref{prop:approxseqpsi}. 
Set $\phi=\wei[-1]\psi\in\sfX$ and, for each $n\in\N$, $\phi_n=\wei[-1]\psi_n\in\cssfX$. 
We immediately have $\phi_n(\bfx)\to\phi(\bfx)$ and $\OpQ\phi_n(\bfx)\to\OpQ\phi(\bfx)$ for $\bfx\in\nzX$, 
while $\Dirac\wei\phi_n=\Dirac\psi_n\to0$ in $\wLspin$. 
From \eqref{eq:Qpsi/wL2loc} we also have $\wei\abs{\OpQ\phi_n}^2,\,\wei\abs{\OpQ\phi}^2\in L^1_\loc$.
However Proposition \ref{prop:normspindecomp} gives
\[
\int\abs{\OpQ\phi_n(\bfx)}^2\wei(\bfx)\,\rd^3\bfx=
\wnorm{\OpQ\phi_n}^2\le\wnorm{\Dirac\wei\phi_n}^2
=\int\abs{\Dirac\psi_n(\bfx)}^2\wei(\bfx)\,\rd^3\bfx
\]
for all $n\in\N$. Taking $n\to\infty$ and applying Fatou's lemma it follows that 
\[
\int\abs{\OpQ\phi(\bfx)}^2\wei(\bfx)\,\rd^3\bfx=0.
\]
Hence $\wei^{1/2}\OpQ\phi=0$ almost everywhere. Since $\OpQ\phi\in\sfX$ we then get $\OpQ\phi=0$ on $\nzX$.
If $\cKf$ is non-constant the closed integral curves of $\cKf$ are dense (in $\R^3$). Since $\phi$ is non-trivial 
we then obtain a contradiction with Proposition \ref{prop:specQ}. 

To deal with the remaining case we may assume $\cKf=\euv{3}$. Then
$\nzX=\R^3$ while the equation $\OpQ\phi=0$ becomes
\[
(-\ri\nabla_3-A_3)\phi=0
\quad\Longleftrightarrow\quad
\nabla_3(e^{-\ri f}\phi)=0,
\]
where $f\in C^\infty$ is any real-valued function with $\nabla_3f=A_3$. It is straightforward to see 
that this final equation cannot have a non-trivial solution $\phi\in\Lspin\cap C^\infty$. 
\end{proof}

{\small

}

\bigskip

\noindent
Daniel M.\ Elton\\
Department of Mathematics and Statistics\\
Fylde College\\
Lancaster University\\
Lancaster LA1 4YF\\
United Kingdom

\smallskip

\noindent
e-mail: \texttt{d.m.elton@lancaster.ac.uk}

\end{document}